\documentclass[11pt]{article}
\usepackage[T1]{fontenc}
\usepackage{lmodern}
\usepackage{amsmath,amssymb,amsthm,mathtools}
\usepackage[a4paper,margin=29mm]{geometry}
\usepackage{microtype}
\usepackage{enumitem}
\usepackage{booktabs,array}
\usepackage[colorlinks=true,linkcolor=blue!50!black,citecolor=blue!50!black,urlcolor=blue!50!black]{hyperref}
\usepackage{xcolor}
\hypersetup{pdftitle={Total Rings of Quotients of Prescribed Weak Global Dimension},pdfsubject={A residue-field sequence construction and its homological structure},pdfkeywords={weak global dimension, total quotient ring, non-coherent ring, Tor}}
\usepackage{fancyhdr}
\newtheorem{theorem}{Theorem}[section]
\newtheorem{proposition}[theorem]{Proposition}
\newtheorem{lemma}[theorem]{Lemma}
\newtheorem{corollary}[theorem]{Corollary}
\theoremstyle{definition}
\newtheorem{example}[theorem]{Example}
\theoremstyle{remark}\newtheorem{remark}[theorem]{Remark}
\DeclareMathOperator{\Spec}{Spec}\DeclareMathOperator{\Max}{Max}
\DeclareMathOperator{\Min}{Min}\DeclareMathOperator{\Ann}{Ann}
\DeclareMathOperator{\Tor}{Tor}\DeclareMathOperator{\Ext}{Ext}
\DeclareMathOperator{\fd}{fd}\DeclareMathOperator{\pd}{pd}
\DeclareMathOperator{\wgd}{w.gl.dim}\DeclareMathOperator{\gld}{gl.dim}
\DeclareMathOperator{\Jac}{Jac}\DeclareMathOperator{\Nil}{Nil}
\DeclareMathOperator{\Reg}{Reg}
\newcommand{\N}{\mathbb N}\newcommand{\m}{\mathfrak m}
\newcommand{\p}{\mathfrak p}
\newcommand{\T}{\mathcal T}
\newcommand{\kk}{\kappa}
\numberwithin{equation}{section}
\title{Total Rings of Quotients with Arbitrary Weak Global Dimensions}
\author{Xiaolei Zhang \thanks{School of Mathematics and Statistics, Tianshui Normal University 
		Tianshui 741000, China;	E-mail: zxlrghj@163.com }}
\date{}
\begin{document}
\maketitle
\begin{abstract}
For every nonnegative integer $n$, we construct a commutative total ring of quotients of weak global dimension exactly $n$. For $n\geq1$ the examples are reduced and non-coherent. The construction applies to any nonzero local ring $(A,\m)$: one adjoins countably many residue-field coordinates subject to eventual agreement with the residue of the $A$-coordinate. The resulting ring $\T(A)$ satisfies $Q(\T(A))=\T(A)$ and $\wgd\T(A)=\wgd A$. Its prime spectrum, maximal localizations, nilradical, Jacobson radical, and idempotents are described explicitly. For every pair of modules, positive-degree Tor over $\T(A)$ is naturally identified with Tor over $A$ after quotienting by a projective ideal generated by orthogonal idempotents. We prove that $\T(A)$ is coherent if and only if $A$ is a field. Specializing to regular local polynomial rings yields the prescribed dimensions, with explicit Koszul witnesses for the lower bounds. We also examine the minimal spectrum, the distinction between global and local Pr\"ufer conditions, and a finitely presented cyclic module of flat dimension one and projective dimension two.
\end{abstract}
\noindent\textbf{Keywords.} Total ring of quotients; weak global dimension; non-coherent ring; flat epimorphism; Pr\"ufer ring; residue field; orthogonal idempotents.\\
\textbf{2020 Mathematics Subject Classification.}  13D05.

\section{Introduction}
A commutative ring $R$ is a total ring of quotients when every non-zero-divisor of $R$ is a unit. Equivalently, its canonical embedding into its classical total quotient ring $Q(R)$ is an isomorphism. This condition places a restriction on the regular elements of $R$, but it does not by itself prescribe the homological dimensions of its localizations. The purpose of this paper is to make that distinction explicit and use it to realize arbitrary finite weak global dimensions.

The question we address is the following:
\begin{quote}
For every integer $n\geq0$, does there exist a commutative ring $R$ such that $R=Q(R)$ and $\wgd R=n$?
\end{quote}
We give an affirmative answer by a uniform construction. The examples for positive $n$ are non-coherent. This qualification is essential at dimension zero: a commutative ring of weak global dimension zero is von Neumann regular and therefore coherent.

The construction retains an arbitrary local ring $A$ as one maximal localization and makes every other maximal localization a field. At the same time, infinitely many isolated field coordinates ensure that every nonunit has a nonzero annihilator. The two requirements are compatible because localization can turn a zero divisor into a regular nonunit by annihilating its former annihilators.

\begin{theorem}[Main construction]\label{thm:main}
Let $(A,\m)$ be a nonzero commutative local ring, let $\kk=A/\m$, and write $\bar a$ for the residue of $a\in A$. Define
\[
\T(A)=\left\{(a,(c_i)_{i\in\N})\in A\times\kk^{\N}:
 c_i=\bar a\text{ for all but finitely many }i\right\}.
\]
With coordinatewise operations, $R=\T(A)$ has the following properties.
\begin{enumerate}[label=\textup{(\arabic*)},leftmargin=*]
\item $R=Q(R)$.
\item There is a maximal ideal $M_\infty$ with $R_{M_\infty}\cong A$. All other maximal localizations are isomorphic to $\kk$.
\item $\wgd R=\wgd A$, allowing the value $\infty$, and $\dim R=\dim A$.
\item $R$ is reduced if and only if $A$ is reduced.
\item $R$ is coherent if and only if $A$ is a field.
\end{enumerate}
\end{theorem}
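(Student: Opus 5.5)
The plan is to describe $R$ through its idempotents and then reduce every assertion to either $A$ or $\kk$. For $i\in\N$ let $e_i\in R$ be the element with $A$-coordinate $0$, $i$-th residue coordinate $1$, and all other coordinates $0$; this lies in $R$ since it agrees with $\bar 0$ off one index. Let $I=\bigoplus_i Re_i$ be the ideal of elements with $A$-coordinate $0$; these elements have only finitely many nonzero residue coordinates, so $I\cong\kk^{(\N)}$. The map $\pi:R\to A$, $(a,(c_i))\mapsto a$, is surjective with kernel $I$, so $R/I\cong A$. Each $e_i$ is idempotent and $Re_i\cong\kk$ is a field.

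First we determine the primes. A prime $P$ either contains all the $e_i$, hence contains $I$, so that $P=\pi^{-1}(\mathfrak q)$ for some $\mathfrak q\in\Spec A$; or it misses some $e_j$. In the second case $e_j(1-e_j)=0$ forces $1-e_j\in P$, and then $P=\{x: c_j(x)=0\}=:M_j$, which is maximal with $R/M_j\cong\kk$. The maximal ideals are therefore the $M_j$ together with $M_\infty=\pi^{-1}(\m)$. Localizing at $M_j$ inverts $1-e_i$ for $i\ne j$ and kills $e_i$ for $i\neq j$, and one checks $R_{M_j}\cong R e_j\cong\kk$. For $M_\infty$, every $e_i$ satisfies $e_i(1-e_i)=0$ with $1-e_i\notin M_\infty$, so $I_{M_\infty}=0$ and $R_{M_\infty}\cong A_\m=A$. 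This proves (2). It also gives $\dim R=\max(\dim A,0)=\dim A$, and $\wgd R=\sup_M\wgd R_M=\max(\wgd A,0)=\wgd A$, which is (3). For (4), $\Nil R$ is detected locally, or directly: nilpotents have nilpotent $A$-coordinate and, since $\kk$ is reduced, zero residue coordinates wherever the $A$-coordinate is nilpotent. So $\Nil R=\{(a,(\bar a))\colon a\in\Nil A\}\cong\Nil A$, which vanishes exactly when $A$ is reduced.

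For (1), take a non-zero-divisor $x=(a,(c_i))$. Since $xe_i=c_ie_i$, every $c_i$ is nonzero. Because $c_i=\bar a$ for almost all $i$, we get $\bar a\neq0$, so $a$ is a unit of the local ring $A$. The candidate inverse $(a^{-1},(c_i^{-1}))$ lies in $R$, as its residue coordinates agree with $\overline{a^{-1}}$ cofinitely. Hence $x$ is a unit, and $R=Q(R)$.

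Part (5) is where I expect the main obstacle. If $A=\kk$ is a field, then every local ring of $R$ is a field, so $R$ is von Neumann regular and in particular coherent. Conversely, suppose $A$ is not a field and pick $0\ne t\in\m$. Consider the principal ideal $Rx$ with $x=(t,(0))$, which lies in $R$ because $\bar t=0$. Coherence requires $\Ann_R(x)$ to be finitely generated. We have $\Ann_R(x)=\{(b,(c_i)): bt=0\}$, and this contains all of $I$. Suppose it were finitely generated by $y_1,\dots,y_r$. There is then a cofinite set of indices $i$ on which each $y_k$ has residue coordinate $\overline{b_k}$, where $b_k\in\Ann_A(t)\subseteq\m$, the inclusion holding because $t\neq0$. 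So on that cofinite set every residue coordinate of every $y_k$ is $0$, hence every element of the ideal they generate has residue coordinates $0$ there. But $e_i$ for a large $i$ in that set is in $\Ann_R(x)$ with residue coordinate $1$, a contradiction. The delicate point is exactly this use of $\Ann_A(t)\subseteq\m$, which is why I choose a nonzero $t\in\m$. If instead $\m=0$, then $A$ is a field.
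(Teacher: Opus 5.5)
Your proof is correct. For (1), (2), (4) and the non-coherence half of (5) it follows the paper's own argument: you use the idempotents $e_i$ to classify primes and to get $R_{M_j}\cong\kk$ and $R_{M_\infty}\cong A$. You show a non-zero-divisor has all residue coordinates nonzero and is therefore a unit. And $\Ann_R((t,0))$ contains every $e_i$, yet any finitely many of its elements have residue coordinates vanishing on a common cofinite set, because $\Ann_A(t)\subseteq\m$.

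The real difference is in (3). You read off $\wgd R=\wgd A$ and $\dim R=\dim A$ directly from (2), since both dimensions are suprema over maximal localizations (for $\wgd$ this is Lemma~\ref{lem:localfd}) and the extra localizations are fields. Similarly, in the field case of (5) you get von Neumann regularity from all local rings being fields, whereas the paper writes down an explicit quasi-inverse.

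The paper instead derives (3) from Theorem~\ref{thm:tor}. Your ideal $I$, which the paper calls $J$, is the kernel of the flat quotient $R\to A=S^{-1}R$ with $S=\{1-e_F\}$. This $J$ kills $\Tor_i^R(M,N)$ for $i\geq1$ because each $e_jR\cong\kk$. Flat base change then gives $\Tor_i^R(M,N)\cong\Tor_i^A(M/JM,N/JN)$.

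Your route is shorter and fully sufficient for the theorem as stated. The paper's route costs more but yields the modulewise equality $\fd_RM=\fd_A(M/JM)$. The paper reuses that for the flatness criterion (Corollary~\ref{cor:flatcriterion}), the Koszul witnesses (Corollary~\ref{cor:koszulR}), and the cyclic module of Proposition~\ref{prop:cyclic}.
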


\begin{corollary}[Prescribed finite dimensions]\label{cor:main}
Fix a field $k$. For $n\geq0$, let
\[
 A_n=k[x_1,\ldots,x_n]_{(x_1,\ldots,x_n)},\qquad R_n=\T(A_n),
\]
where $A_0=k$. Then
\[
 Q(R_n)=R_n,\qquad \wgd R_n=\dim R_n=n.
\]
Every $R_n$ is reduced and non-Noetherian. For $n\geq1$, $R_n$ is non-coherent; $R_0$ is coherent and von Neumann regular.
\end{corollary}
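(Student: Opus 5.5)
The corollary should follow by specializing Theorem~\ref{thm:main} to $A=A_n$ and supplying the standard facts about $A_n$. I will take parts (1)--(5) of the theorem as given.

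First, the relevant properties of $A_n$. It is a regular local ring of Krull dimension $n$ with residue field $k$. By Auslander--Buchsbaum--Serre, $\gld A_n=n$. For a Noetherian ring, weak global dimension equals global dimension, because finitely generated modules over a Noetherian ring are finitely presented and flat dimension coincides with projective dimension for them. Hence $\wgd A_n=n$. Explicitly, the Koszul complex on $x_1,\dots,x_n$ resolves $k$ and gives $\Tor^{A_n}_n(k,k)\cong k\neq0$, which witnesses the lower bound. Being a domain, $A_n$ is reduced. It is a field exactly when $n=0$.

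Next, apply the theorem. Part (1) gives $Q(R_n)=R_n$. Part (3) gives $\wgd R_n=\wgd A_n=n$ and $\dim R_n=\dim A_n=n$. Part (4) gives that $R_n$ is reduced. Part (5) gives that $R_n$ is coherent iff $A_n$ is a field, that is, iff $n=0$. For $n=0$ we have $\wgd R_0=0$, so $R_0$ is von Neumann regular. This can also be seen directly: $\T(k)$ is the ring of eventually constant sequences in $k$, and every element is an idempotent times a unit.

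It remains to show non-Noetherianity, which the theorem does not state. Let $e_i\in R_n$ be the element with $A$-coordinate $0$, $c_i=1$, and all other coordinates $0$. It lies in $R_n$ because only finitely many coordinates differ from $\bar 0$. The ideals $(e_0,\dots,e_m)$ form a strictly ascending chain, since $e_{m+1}$ is not in $(e_0,\dots,e_m)$: every element of that ideal has vanishing $(m+1)$-coordinate. So $R_n$ is not Noetherian; equivalently, $R_n$ has infinitely many orthogonal idempotents. I expect no real obstacle here; the only care needed is the Noetherian identification $\wgd=\gld$ for $A_n$.
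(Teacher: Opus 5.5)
Your proposal is correct and follows essentially the paper's route. The paper proves this statement as Theorem~\ref{thm:realization} from the same ingredients: the total-quotient property, the description of the radicals, the Tor comparison together with $\wgd A_n=n$ (via Hilbert's syzygy theorem plus the Koszul witness), $\dim R=\dim A$, and the coherence criterion, whose proof shows $J$ is not finitely generated, which matches your ascending chain of idempotent-generated ideals. Two cosmetic points: the paper's index set is $\N=\{1,2,\ldots\}$, so your chain should start at $e_1$; and the Noetherian identification $\wgd=\gld$ is not needed, since $\wgd A_n\le\gld A_n$ holds over any ring and the Koszul computation already gives the lower bound.
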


The homological statement admits a modulewise refinement. Let $J$ be the ideal of finite-support field coordinates. For an $R$-module $M$, put $M_A=M/JM$. We prove natural identifications
\[
 \Tor_i^R(M,N)\cong\Tor_i^A(M_A,N_A)\quad(i\geq1)
\]
with the right-hand side viewed as an $R$-module through $R\twoheadrightarrow A$. Consequently, $\fd_R M=\fd_A M_A$ when flat dimensions are taken in $\N\cup\{\infty\}$ and the zero module has flat dimension zero. Thus the equality of weak global dimensions is not merely a comparison of upper bounds.

\subsection*{Relation with existing questions}
The distinction between semihereditary, Gaussian, arithmetical, and Pr\"ufer rings is developed in \cite{BG,GS,Glaz}. In the convention used there, a Pr\"ufer ring is a ring whose finitely generated ideals containing a regular element are invertible. Every total ring of quotients satisfies this condition. Glaz and Schwarz ask in \cite[Section 6, Open Question 6]{GS} whether total rings of quotients can have weak global dimensions outside $\{0,1,\infty\}$. The construction proved here gives a negative answer to that formulation, with every finite value occurring. This is a mathematical consequence of the explicit proof; no claim of bibliographic priority is made.

The Gaussian restriction is genuinely stronger. Donadze and Thomas prove the $\{0,1,\infty\}$ restriction for Gaussian rings \cite{DT}. Our examples with $n\geq2$ are non-Gaussian; we exhibit a direct failure of the content formula. This prevents any confusion between the two questions.

The argument does not use an assertion that taking total quotients preserves the weak dimension of a polynomial ring. Instead, it determines every maximal localization of the constructed ring and computes Tor directly. General background on localization, regular local rings, and homological algebra can be found in \cite{AM,CE,Lam,Matsumura,Stacks,Weibel}.

\section{Conventions and homological preliminaries}
All rings are commutative with identity, and all modules are unital. A local ring is assumed nonzero. For a ring $B$, $\Reg(B)$ is the set of elements whose multiplication maps on $B$ are injective, and $Q(B)=\Reg(B)^{-1}B$. We include zero among the zero divisors of a nonzero ring, so that ``every nonunit is a zero divisor'' has its literal usual meaning.

The flat dimension $\fd_B M$ is the least length of a flat resolution of $M$, or $\infty$ if none exists. We use the convention $\fd_B0=0$. The weak global dimension is
\[
 \wgd B=\sup\{\fd_B M:M\text{ is a }B\text{-module}\}.
\]
The projective dimension and global dimension are denoted by $\pd$ and $\gld$. The notations $\Spec B$, $\Max B$, and $\Min B$ designate the prime, maximal, and minimal prime spectra. The latter two carry their subspace Zariski topologies whenever topology is discussed. For $b\in B$, write $D_B(b)=\{\p\in\Spec B:b\notin\p\}$.

\begin{lemma}[Local detection of flat dimension]\label{lem:localfd}
For every $B$-module $M$,
\[
 \fd_B M=\sup_{\p\in\Max B}\fd_{B_\p}M_\p.
\]
In particular,
\[
 \wgd B=\sup_{\p\in\Max B}\wgd B_\p.
\]
\end{lemma}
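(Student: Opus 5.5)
The plan is to prove the two inequalities separately, using the standard Tor characterization of flat dimension: $\fd_B M\le n$ if and only if $\Tor^B_{n+1}(M,N)=0$ for every $B$-module $N$. The same holds with $B$ replaced by a localization $B_\p$. Because of the convention $\fd_B 0=0$ and the fact that both sides take values in $\N\cup\{\infty\}$, it is enough to show, for each $n\ge0$, that $\fd_B M\le n$ holds exactly when $\fd_{B_\p}M_\p\le n$ for all $\p\in\Max B$.

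\emph{Inequality $\ge$.} Fix a flat resolution $F_\bullet\to M$ of length $n=\fd_B M$. Localization at $\p$ is exact, and it carries flat $B$-modules to flat $B_\p$-modules, since $F_\p\otimes_{B_\p}-\cong F\otimes_B-$ on $B_\p$-modules. So $(F_\bullet)_\p$ is a flat resolution of $M_\p$ of length at most $n$, which gives $\fd_{B_\p}M_\p\le\fd_B M$. If $\fd_BM=\infty$ there is nothing to prove.

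\emph{Inequality $\le$.} Suppose $\fd_{B_\p}M_\p\le n$ for every maximal $\p$. Let $N$ be any $B$-module. Flat base change for Tor along the flat map $B\to B_\p$ gives
\[
\Tor^B_{n+1}(M,N)_\p\cong\Tor^{B_\p}_{n+1}(M_\p,N_\p)=0.
\]
To justify this, compute with a projective resolution $P_\bullet\to M$ and use $(P\otimes_BN)_\p\cong P_\p\otimes_{B_\p}N_\p$ together with exactness of localization. A module whose localizations at all maximal ideals vanish is zero, so $\Tor^B_{n+1}(M,N)=0$ for all $N$, and hence $\fd_BM\le n$. Combining the two inequalities gives the first displayed equality.

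\emph{The statement about $\wgd$.} Taking suprema over $M$ in the first equality gives $\wgd B\le\sup_\p\wgd B_\p$. For the reverse inequality, every $B_\p$-module $L$ is a $B$-module with $L_\p\cong L$. Hence $\fd_{B_\p}L=\fd_{B_\p}L_\p\le\fd_BL\le\wgd B$.

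The only step requiring care is the base-change isomorphism for Tor and the Tor criterion for flat dimension. The criterion follows by dimension shifting: take the $n$-th syzygy of a flat resolution, and note that $\Tor_1$ against all modules vanishes exactly when that syzygy is flat. Both facts are standard, so I do not expect a genuine obstacle here.
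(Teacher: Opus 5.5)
Your proof is correct. The paper gives no argument beyond calling the lemma well-known, and yours is the standard one it alludes to: localize a flat resolution to get $\geq$, then use flat base change of Tor along $B\to B_\p$ (the paper's Lemma~\ref{lem:basechange} with $C=B_\p$) together with the fact that a module vanishing at every maximal ideal is zero to get $\leq$.
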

\begin{proof} It is well-known.
\end{proof}

\begin{lemma}[Flat quotient comparison]\label{lem:basechange}
Suppose $B\to C$ is a flat ring homomorphism. Then
\[
 C\otimes_B\Tor_i^B(M,N)
 \cong\Tor_i^C(C\otimes_BM,C\otimes_BN)
\]
for all $i\geq0$ and all $B$-modules $M,N$.
\end{lemma}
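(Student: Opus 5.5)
Take a projective resolution $P_\bullet\to M$ of $M$ by free $B$-modules and compute $\Tor$ over $B$ as $H_i(P_\bullet\otimes_B N)$. Write $M_C=C\otimes_BM$ and $N_C=C\otimes_BN$. The proof then compares three complexes and uses flatness of $C$ at exactly one point.

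First, since $C$ is flat over $B$, the functor $C\otimes_B-$ is exact and so commutes with homology:
\[
 C\otimes_B H_i(P_\bullet\otimes_BN)\cong H_i\bigl(C\otimes_B(P_\bullet\otimes_BN)\bigr).
\]
The left-hand side is $C\otimes_B\Tor_i^B(M,N)$.

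Second, use associativity and commutativity of tensor products to get natural isomorphisms of complexes
\[
 C\otimes_B(P_\bullet\otimes_BN)\cong (C\otimes_BP_\bullet)\otimes_B N\cong (C\otimes_BP_\bullet)\otimes_C(C\otimes_BN).
\]
The last step is the standard identity $X\otimes_BN\cong X\otimes_C(C\otimes_BN)$ for a $C$-module $X$.

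Third, $C\otimes_BP_\bullet$ is a complex of free $C$-modules, and by flatness it is exact in positive degrees with $H_0=M_C$. So it is a projective resolution of $M_C$ over $C$, and its homology after applying $-\otimes_C N_C$ computes $\Tor_i^C(M_C,N_C)$. Combining the three steps gives the claimed isomorphism for all $i\ge0$.

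Naturality in $M$ and $N$ follows from the comparison theorem for resolutions. The only step that genuinely uses the hypothesis is the first (together with exactness of the base-changed resolution in the third); the rest is formal. The point to watch is that the isomorphisms are independent of the chosen resolution, which is standard.
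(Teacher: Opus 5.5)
Your proof is correct and follows essentially the same route as the paper. You base-change a free resolution of $M$ along the flat map to get a free $C$-resolution of $C\otimes_BM$, identify $C\otimes_B(P_\bullet\otimes_BN)$ with $(C\otimes_BP_\bullet)\otimes_C(C\otimes_BN)$, and use exactness of $C\otimes_B-$ to commute it with homology; you only spell out the associativity identification that the paper leaves implicit.
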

\begin{proof}
Take a free resolution $P_\bullet\to M$. Flatness makes $C\otimes_BP_\bullet$ a free $C$-resolution of $C\otimes_BM$. The complexes obtained by tensoring with $C\otimes_BN$ and by applying $C\otimes_B-$ to $P_\bullet\otimes_BN$ are naturally isomorphic. Flatness also permits tensoring to commute with their homology.
\end{proof}

A ring is \emph{coherent} if every finitely generated ideal is finitely presented. We shall only need the following necessary condition: if $B$ is coherent, then $\Ann_B(b)$ is finitely generated for every $b\in B$. Indeed, the exact sequence
\[
 0\longrightarrow\Ann_B(b)\longrightarrow B\longrightarrow bB\longrightarrow0
\]
has a finitely presented final term, so its kernel is finitely generated.

\begin{lemma}[The zero-dimensional homological case]\label{lem:zero}
A commutative ring of weak global dimension zero is von Neumann regular and hence coherent.
\end{lemma}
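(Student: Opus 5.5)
The plan is to show directly that every element is von Neumann regular, using flatness of cyclic modules, and then deduce coherence from the structure of principal ideals.

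\emph{Step 1: every principal ideal is idempotent.} Let $B$ satisfy $\wgd B=0$, so every $B$-module is flat. Fix $b\in B$ and consider the exact sequence $0\to bB\to B\to B/bB\to 0$. Since $B/bB$ is flat, $\Tor_1^B(B/bB,B/bB)=0$. Computing this Tor from the sequence gives the standard identification $\Tor_1^B(B/I,B/I)\cong I/I^2$ for any ideal $I$. With $I=bB$ we get $bB=b^2B$. Hence $b=b^2x$ for some $x\in B$, which is exactly von Neumann regularity.

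\emph{Step 2: coherence.} For $b=b^2x$, put $e=bx$. Then $e^2=b^2x^2=bx=e$ and $eB=bB$, so every principal ideal is generated by an idempotent. For a finitely generated ideal $I=(b_1,\dots,b_n)$, write $b_jB=e_jB$ and induct using $eB+fB=(e+f-ef)B$; thus $I=eB$ with $e^2=e$. Then $B=eB\oplus(1-e)B$, so $I$ is a direct summand of $B$. Consequently $I$ is finitely generated projective, in particular finitely presented, and $B$ is coherent.

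\emph{Main obstacle.} The only nontrivial input is the isomorphism $\Tor_1(B/I,B/I)\cong I/I^2$. It follows from the long exact sequence: $\Tor_1(B/I,B/I)=\ker(I\otimes B/I\to B/I)$, and $I\otimes_B B/I\cong I/I^2$ maps to zero in $B/I$. Alternatively, one can use the flatness criterion $I\otimes B/I\to B/I$ injective, i.e.\ $I\cap I=I\cdot I$. If one prefers to avoid this, flatness of $B/bB$ gives $bB\cap J=bJ$ for all ideals $J$; taking $J=bB$ yields $bB=b^2B$ directly.
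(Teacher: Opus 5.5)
Your proof is correct. The identification $\Tor_1^B(B/bB,B/bB)\cong bB/b^2B$ gives $b\in b^2B$, and idempotent generation of finitely generated ideals via $eB+fB=(e+f-ef)B$ then gives coherence. The paper only says the lemma is well known, and your argument is the standard one it has in mind; the field case of Theorem~\ref{thm:coherence} uses the same idempotent-generated-ideals step.
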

\begin{proof} It is well-known.
\end{proof}

\section{The residue-field sequence ring}
Fix a local ring $(A,\m)$ and let $\kk=A/\m$. Put $R=\T(A)$ as in Theorem~\ref{thm:main}, that is, Let $(A,\m)$ be a nonzero commutative local ring, let $\kk=A/\m$, and write $\bar a$ for the residue of $a\in A$. Define
\[
\T(A)=\left\{(a,(c_i)_{i\in\N})\in A\times\kk^{\N}:
c_i=\bar a\text{ for all but finitely many }i\right\}.
\]
With coordinatewise operations.  Eventual equality throughout the paper means equality outside a finite subset of $\N=\{1,2,\ldots\}$. 

\subsection{Basic structure and alternative descriptions}
\begin{proposition}\label{prop:structure}
The set $R$ is a unital subring of $A\times\kk^{\N}$. There is a split exact sequence of rings and additive groups
\[
 0\longrightarrow J\longrightarrow R\xrightarrow{\pi}A\longrightarrow0,
 \qquad J=\bigoplus_{i\in\N}\kk e_i,
\]
where
\[
 \pi(a,(c_i))=a,\qquad
 s(a)=(a,(\bar a,\bar a,\ldots)).
\]
Each $e_i$ is a nonzero idempotent, $e_ie_j=0$ for $i\ne j$, and $e_iR\cong\kk$.
\end{proposition}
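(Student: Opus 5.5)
The plan is to verify everything coordinatewise inside the product ring $A\times\kk^{\N}$, using only that the residue map $a\mapsto\bar a$ is a unital ring homomorphism $A\to\kk$.

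\textbf{Subring.} For $(a,(c_i))$ and $(b,(d_i))$ in $R$, choose finite sets $F,G\subseteq\N$ outside of which $c_i=\bar a$ and $d_i=\bar b$. Outside $F\cup G$ we then have
\[
c_i\pm d_i=\overline{a\pm b},\qquad c_id_i=\overline{ab}.
\]
So sums, differences and products stay in $R$. The identity $(1,(1,1,\ldots))$ lies in $R$ because $\bar 1=1$ in $\kk$.

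\textbf{The maps $\pi$ and $s$.} The projection $\pi$ is the restriction of the first-coordinate projection, so it is a ring homomorphism. The map $s(a)=(a,(\bar a,\bar a,\ldots))$ lies in $R$ trivially, and it is a unital ring homomorphism because $a\mapsto\bar a$ is one. Since $\pi s=\mathrm{id}_A$, $\pi$ is surjective and the sequence splits as rings. In particular it splits additively, via $r\mapsto\bigl(r-s\pi(r),\,\pi(r)\bigr)$.

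\textbf{The kernel.} Let $e_i\in\kk^{\N}$ be the $i$-th unit vector, viewed as $(0,e_i)\in R$. This element lies in $R$ since $\bar 0=0$ and $e_i$ has finite support. The kernel of $\pi$ consists of the elements $(0,(c_i))$ with $c_i=\bar 0=0$ for almost all $i$. These are exactly the finite sums $\sum c_ie_i$ with $c_i\in\kk$, which gives $J=\bigoplus_i\kk e_i$. The sum is direct because distinct coordinates are independent.

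\textbf{Idempotents.} The relations $e_i^2=e_i$ and $e_ie_j=0$ for $i\neq j$ are immediate coordinatewise, and $e_i\neq0$ since $\kk\neq0$. For $r=(a,(c_j))$ we get $e_ir=(0,c_ie_i)$, so $e_iR=\kk e_i$. The map $\kk\to e_iR$, $c\mapsto ce_i$, is a bijection. To see that $e_iR\cong\kk$ as $R$-modules, observe that $R$ acts on $e_iR$ through the homomorphism $R\to\kk$, $r\mapsto c_i$. This is projection to the $i$-th coordinate, which is surjective: $s(a)$ has $i$-th coordinate $\bar a$, and every element of $\kk$ is a residue. It is also a ring isomorphism onto the corner ring $e_iR$ with identity $e_i$.

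\textbf{Main obstacle.} There is no real difficulty. The only point needing care is pinning down the meaning of ``$J=\bigoplus\kk e_i$'' and ``$e_iR\cong\kk$'', namely that the $R$-module structure factors through the $i$-th coordinate map. Once that is stated, all the checks above are routine.
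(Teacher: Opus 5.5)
Your proof is correct and follows essentially the same coordinatewise argument as the paper: closure from the finiteness of the union of exceptional sets, the unital section $s$, the kernel of $\pi$ as the finite-support sequences, and the formula $e_i r = c_i e_i$ for all assertions about the $e_i$. Your remark that the $R$-module structure on $e_iR$ factors through the $i$th coordinate map makes explicit what the paper leaves implicit.
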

\begin{proof}
The union of the exceptional sets for two elements is finite. Outside this union the sum and product have coordinates $\bar a+\bar b=\overline{a+b}$ and $\bar a\bar b=\overline{ab}$. The identity is $(1,(1,1,\ldots))$. The kernel of $\pi$ consists exactly of finite-support sequences, and $s$ is a unital ring section. All assertions about the $e_i$ follow from coordinatewise multiplication; specifically, $(a,(c_j))e_i=c_ie_i$.
\end{proof}

Let $C(\kk)$ denote the ring of eventually constant $\kk$-valued sequences, with $\lambda:C(\kk)\to\kk$ the eventual-value map. The construction is the pullback
\[
 R=A\times_{\kk}C(\kk)
   =\{(a,c):\bar a=\lambda(c)\}.
\]
It also has an additive presentation $A\oplus\kk^{(\N)}$: writing an element as $s(a)+u$, multiplication becomes
\begin{equation}\label{eq:dorroh}
 (a,u)(b,v)=(ab,\bar a v+\bar b u+uv),
\end{equation}
where $uv$ is coordinatewise multiplication of finite-support vectors. The term $uv$ is essential. In particular, $J^2=J$; this ring is not the square-zero idealization of the $A$-module $\kk^{(\N)}$.

\begin{proposition}[Finite stages]\label{prop:stages}
For a finite set $F\subseteq\N$, let
\[
 R_F=\{(a,(c_i))\in R:c_i=\bar a\text{ whenever }i\notin F\}.
\]
Then $R_F\cong A\times\kk^F$ and $R=\bigcup_F R_F$. If $F\subseteq G$, the transition map under these identifications sends
\[
 (a,(c_i)_{i\in F})\longmapsto
 (a,(c_i)_{i\in F},(\bar a)_{i\in G\setminus F}).
\]
\end{proposition}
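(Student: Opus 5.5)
The plan is to write down the identification $R_F\to A\times\kk^F$ explicitly and check each claim directly from the definitions. Define
\[
\phi_F:R_F\to A\times\kk^F,\qquad (a,(c_i)_{i\in\N})\mapsto (a,(c_i)_{i\in F}).
\]
The first step is to note that $R_F$ is a unital subring of $R$. Indeed, the identity $(1,(1,1,\ldots))$ satisfies $c_i=\bar 1$ for all $i$. Moreover, if $c_i=\bar a$ and $d_i=\bar b$ for $i\notin F$, then the sum and product have coordinates $\overline{a+b}$ and $\overline{ab}$ there, exactly as in the proof of Proposition~\ref{prop:structure}. The map $\phi_F$ is a ring homomorphism because all operations are coordinatewise.

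Next I will show $\phi_F$ is bijective by giving its inverse. Send $(a,(c_i)_{i\in F})$ to the element of $A\times\kk^{\N}$ whose $A$-coordinate is $a$, whose $i$-th coordinate is $c_i$ for $i\in F$, and whose $i$-th coordinate is $\bar a$ for $i\notin F$. This element lies in $R$, since it agrees with $\bar a$ outside the finite set $F$, and it lies in $R_F$ by construction. The two composites are the identity: an element of $R_F$ is determined by $a$ together with its coordinates indexed by $F$, because its remaining coordinates are forced to equal $\bar a$.

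To get $R=\bigcup_F R_F$, take $(a,(c_i))\in R$ and let $F$ be its finite exceptional set $\{i:c_i\neq\bar a\}$. Then the element lies in $R_F$. The reverse inclusion holds by definition.

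For the transition maps, observe that $F\subseteq G$ gives $R_F\subseteq R_G$, because the condition defining $R_F$ is imposed on more indices. Under the identifications, the inclusion is $\phi_G\circ\phi_F^{-1}$. Applying $\phi_F^{-1}$ to $(a,(c_i)_{i\in F})$ fills every coordinate outside $F$ with $\bar a$, and $\phi_G$ then keeps the coordinates indexed by $G$. The result is precisely the displayed map, with $\bar a$ in the positions $G\setminus F$. There is no real obstacle here; the only care needed is to keep in mind that the identification $R_F\cong A\times\kk^F$ depends on $F$ through the choice of coordinates that are retained.
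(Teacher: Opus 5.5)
Your proposal is correct and follows the paper's own proof. Like the paper, you use restriction to the $A$-coordinate and the $F$-indexed coordinates, with an inverse that fills the remaining positions by $\bar a$, and you use the finite exceptional set to get both the union and the transition formula. The only difference is that you spell out the subring and ring-homomorphism checks, which the paper leaves implicit.
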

\begin{proof}
Restriction to the $A$-coordinate and the coordinates indexed by $F$ is an isomorphism. Its inverse fills all remaining positions by $\bar a$. Every element has a finite exceptional set, proving the union assertion and the formula for the transition maps.
\end{proof}

\begin{remark}
The transition maps need not be flat. For example, if $A$ is a local domain with a regular nonunit $a$, the new field coordinate is annihilated by $a$ as an $A$-module and is not flat over $A$. Our proof of the weak-dimension formula does not assume flatness of these transition maps.
\end{remark}

\subsection{Units and zero divisors}
\begin{proposition}\label{prop:units}
For $r=(a,(c_i))\in R$, the following are equivalent:
\begin{enumerate}[label=\textup{(\roman*)}]
\item $r$ is a unit;
\item $c_i\ne0$ for every $i$;
\item $r$ is a non-zero-divisor.
\end{enumerate}
In particular, $R=Q(R)$.
\end{proposition}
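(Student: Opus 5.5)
We prove the cycle of implications (i)$\Rightarrow$(iii)$\Rightarrow$(ii)$\Rightarrow$(i), and then read off $R=Q(R)$ from (iii)$\Rightarrow$(i).

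The implication (i)$\Rightarrow$(iii) is immediate, since a unit is never a zero divisor.

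For (iii)$\Rightarrow$(ii), we argue by contraposition using the idempotents $e_i$ of Proposition~\ref{prop:structure}. If $c_i=0$ for some $i$, the formula $re_i=c_ie_i$ from that proof gives $re_i=0$. Since $e_i\neq0$, the element $r$ is a zero divisor.

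For (ii)$\Rightarrow$(i), assume every $c_i\neq0$.
\begin{itemize}[leftmargin=*]
\item Choose $i$ outside the finite exceptional set of $r$. Then $\bar a=c_i\neq0$, so $a\notin\m$.
\item Because $A$ is local, $a$ is a unit of $A$.
\item Define the candidate inverse $r'=(a^{-1},(c_i^{-1})_i)$. Each $c_i^{-1}$ exists in $\kk$ since $c_i\neq0$.
\item For every $i$ outside the exceptional set of $r$, we have $c_i^{-1}=\bar a^{-1}=\overline{a^{-1}}$. So $r'$ has the same finite exceptional set, and therefore $r'\in R$.
\item Coordinatewise multiplication gives $rr'=1$.
\end{itemize}
The one point needing care is exactly this verification that $r'$ satisfies the eventual-agreement condition; locality of $A$ is what makes it work.

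Finally, the equivalence of (i) and (iii) says $\Reg(R)$ consists of units. Hence the canonical map $R\to Q(R)=\Reg(R)^{-1}R$ is an isomorphism, that is, $R=Q(R)$.
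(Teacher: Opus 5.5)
Your proof is correct and follows the paper's argument: a vanishing coordinate $c_j=0$ gives the zero-divisor witness $re_j=0$, and when every $c_i\neq0$ the coordinatewise inverse $(a^{-1},(c_i^{-1}))$ lies in $R$ because $\bar a\neq0$ and $A$ is local. The only cosmetic difference is that you arrange the implications as a single cycle, whereas the paper also proves (i)$\Rightarrow$(ii) directly.
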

\begin{proof}
A unit has a unit image in each field coordinate, so (i) implies (ii). If (ii) holds, eventual agreement gives $\bar a\ne0$. Since $A$ is local, $a$ is a unit. The coordinatewise inverse
\[
 \left(a^{-1},(c_i^{-1})\right)
\]
belongs to $R$, because its coordinates eventually equal $\bar a^{-1}=\overline{a^{-1}}$. Thus (ii) implies (i). Every unit is regular. If (ii) fails, choose $j$ with $c_j=0$; then $re_j=0$ with $e_j\ne0$, so $r$ is not regular. This proves all equivalences.
\end{proof}

\begin{remark}[Why infinitely many coordinates are needed]
If $a\in\m$, then $\bar a=0$. An element of $R$ with first coordinate $a$ therefore has zero field coordinates outside a finite set and has many explicit annihilators. In a finite product $A\times\kk^F$, by contrast, a regular nonunit $a\in A$ together with nonzero entries in all field positions is still a regular nonunit. The finite exceptional-set condition and the infinite index set work together.
\end{remark}

\section{Prime ideals, localizations, and radicals}
For $\p\in\Spec A$, define $P_\p=\pi^{-1}(\p)$. For each $i\in\N$, define
\[
 M_i=\{(a,(c_j))\in R:c_i=0\}.
\]
Also put $M_\infty=P_\m$.

\begin{theorem}[Complete prime spectrum]\label{thm:spec}
The prime ideals of $R$ are precisely
\[
 \{P_\p:\p\in\Spec A\}\ \sqcup\ \{M_i:i\in\N\}.
\]
The first family is the closed subset $V_R(J)$, naturally homeomorphic to $\Spec A$. Each $M_i$ is both minimal and maximal, and is an isolated point of $\Spec R$. Moreover,
\[
 R_{P_\p}\cong A_\p,\qquad R_{M_i}\cong\kk.
\]
The maximal ideals are $M_\infty$ and the $M_i$.
\end{theorem}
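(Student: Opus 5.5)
The plan is to split $\Spec R$ according to whether a prime contains the ideal $J=\bigoplus_i\kk e_i$ of Proposition~\ref{prop:structure}, and to use the orthogonal idempotents $e_i$ to control the primes that do not.

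\emph{Primes containing $J$.} Since $R/J\cong A$ via $\pi$, the primes containing $J$ are exactly the $P_\p=\pi^{-1}(\p)$. This set is the closed subset $V_R(J)$, and it is homeomorphic to $\Spec A$ through $\pi$.

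\emph{Primes not containing $J$.} Let $P$ be a prime with $J\not\subseteq P$. Then some $e_i\notin P$. For $j\ne i$ we have $e_ie_j=0\in P$, so $e_j\in P$. Since $e_i(1-e_i)=0$, also $1-e_i\in P$. Now $R/(1-e_i)R\cong e_iR\cong\kk$ is a field, so $(1-e_i)R$ is maximal. This forces $P=(1-e_i)R$. Using $(a,(c_j))e_i=c_ie_i$, one checks that $(1-e_i)R=M_i$: an element $r$ lies in $M_i$ exactly when $re_i=0$, exactly when $r=r(1-e_i)$. So $M_i$ is maximal.

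\emph{Isolation and minimality of $M_i$.} The set $D_R(e_i)$ is open. By the previous step it consists of $M_i$ alone, since every $P_\p$ contains $J\ni e_i$ and every $M_j$ with $j\ne i$ contains $e_i$. Hence $M_i$ is isolated. It is minimal because any prime $Q\subseteq M_i$ cannot contain $e_i$, as $e_i\notin M_i$. By the classification, a prime not containing $e_i$ must equal $M_i$, so $Q=M_i$.

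\emph{Maximal ideals.} Each $P_\p\subseteq P_\m=M_\infty$, and $M_\infty$ is maximal because $R/M_\infty\cong A/\m$. Together with the maximality of the $M_i$, this shows the maximal ideals are exactly $M_\infty$ and the $M_i$.

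\emph{Localizations.} First, $R_{M_i}\cong R_{e_i}\cong e_iR\cong\kk$. Here inverting the idempotent $e_i$ gives $e_iR$, and $e_i\notin M_i$, while elements outside $M_i$ act invertibly on $e_iR\cong\kk$ because $c_i\ne0$.

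Second, for $P_\p$ the key observation is that $J$ dies in the localization. Each $1-e_j\notin P_\p$, since $\pi(1-e_j)=1$. As $e_j(1-e_j)=0$, the image of $e_j$ is zero in $R_{P_\p}$, and $J$ is generated by the $e_j$. Hence $R_{P_\p}\cong (R/J)_{P_\p}\cong A_{\p}$. Here we use that localizing $R/J=A$ at the image of $R\setminus P_\p$, which is $A\setminus\p$, is $A_\p$.

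The main point needing care is this last step: checking that $J_{P_\p}=0$ (and not merely that $J$ is contained in the prime), together with the fact that localization is exact, so that $R_{P_\p}\cong R_{P_\p}/J_{P_\p}$.
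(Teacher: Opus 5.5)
Your proposal is correct and follows essentially the same route as the paper: split by whether a prime contains $J$, use $e_i(1-e_i)=0$ to force $P=(1-e_i)R=M_i$, read off isolation, minimality and $R_{M_i}\cong\kk$ from $D_R(e_i)=\{M_i\}$, and obtain $R_{P_\p}\cong A_\p$ by showing $J_{P_\p}=0$. The only cosmetic difference is that you kill $J$ generator by generator using the individual elements $1-e_j\notin P_\p$, whereas the paper uses $1-e_F$ for a finite support $F$.
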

\begin{proof}
Primes containing $J$ correspond to primes of $R/J\cong A$, giving the first family and the homeomorphism. Let $P$ be a prime not containing $J$. Some $e_i$ lies outside $P$. Since $e_i(1-e_i)=0$, primality gives $1-e_i\in P$. The map to the $i$th coordinate has kernel $(1-e_i)R$: if $c_i=0$, then $r=(1-e_i)r$. Hence
\[
 R/(1-e_i)R\cong\kk,
\]
and $P=(1-e_i)R=M_i$. The localization at $e_i$ is $e_iR\cong\kk$, so $D_R(e_i)=\{M_i\}$ and $R_{M_i}\cong\kk$. A prime contained in $M_i$ must be $M_i$, either by the classification or by the direct-product decomposition given by $e_i$. Thus $M_i$ is minimal as well as maximal.

Fix $\p\in\Spec A$. If $z\in J$ has finite support $F$, then $e_F=\sum_{i\in F}e_i$ satisfies $(1-e_F)z=0$. Its image in $A$ is $1$, so $1-e_F\notin P_\p$. Consequently $J_{P_\p}=0$, and
\[
 R_{P_\p}\cong (R/J)_{P_\p/J}\cong A_\p.
\]
Since $A$ is local, $P_\m$ is the only maximal ideal in the first family. Notice that $M_i$ is incomparable with every $P_\p$: $e_i\in P_\p\setminus M_i$, while $1-e_i\in M_i\setminus P_\p$.
\end{proof}

\begin{corollary}\label{cor:dimension}
One has $\dim R=\dim A$, including infinite Krull dimension.
\end{corollary}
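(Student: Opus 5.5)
The plan is to read the Krull dimension off the explicit description of $\Spec R$ in Theorem~\ref{thm:spec}, working chain by chain. Krull dimension is the supremum of lengths $n$ of strict chains $Q_0\subsetneq Q_1\subsetneq\cdots\subsetneq Q_n$ in $\Spec R$, with the value $\infty$ permitted. So it suffices to show that chains in $\Spec R$ of positive length correspond exactly to chains in $\Spec A$.

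The first step is to dispose of the primes $M_i$. By Theorem~\ref{thm:spec}, each $M_i$ is both minimal and maximal. It is therefore comparable with no prime other than itself; the theorem also records directly that $M_i$ is incomparable with every $P_\p$. Consequently, any strict chain of length $n\geq1$ contains no $M_i$, and lies entirely in the family $\{P_\p:\p\in\Spec A\}=V_R(J)$.

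The second step is to match chains in $V_R(J)$ with chains in $\Spec A$. The correspondence $\p\mapsto P_\p=\pi^{-1}(\p)$ is an inclusion-preserving bijection $\Spec A\to V_R(J)$ whose inverse is $P\mapsto\pi(P)$, since $\pi$ is surjective with kernel $J$. Hence $\p\subsetneq\mathfrak q$ holds if and only if $P_\p\subsetneq P_{\mathfrak q}$. Strict chains of length $n$ in $V_R(J)$ are thus in bijection with strict chains of length $n$ in $\Spec A$.

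Combining the two steps gives $\dim R=\max(\dim A,0)=\dim A$, because $A$ is nonzero and so $\dim A\geq0$. The supremum is taken over the same set of chain lengths on both sides, so the equality holds verbatim when $\dim A=\infty$. There is no real obstacle here. The only point needing care is confirming that the isolated primes $M_i$ cannot lengthen a chain, and this is exactly their simultaneous minimality and maximality from Theorem~\ref{thm:spec}.
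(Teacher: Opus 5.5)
Your proposal is correct and matches the paper's proof: chains of positive length cannot contain any $M_i$, since each is both minimal and maximal, and chains in $V_R(J)$ correspond exactly to chains in $\Spec A$. Nonemptiness of $\Spec A$ then ensures the isolated points do not change the supremum. Your explicit check that $\p\mapsto P_\p$ is an order-isomorphism onto $V_R(J)$ only spells out what the paper leaves implicit.
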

\begin{proof}
All nontrivial chains of prime ideals lie in $V_R(J)$ and correspond exactly to chains in $\Spec A$. The isolated primes add only chains of length zero. A nonzero ring has nonempty spectrum, so these points do not change the supremum.
\end{proof}

\begin{proposition}\label{prop:radicals}
The nilradical and Jacobson radical are
\[
 \Nil(R)=\{(a,0):a\in\Nil(A)\},\qquad
 \Jac(R)=\{(a,0):a\in\m\}.
\]
In particular, $R$ is reduced if and only if $A$ is reduced. Every idempotent of $R$ is either $e_F$ or $1-e_F$ for a finite subset $F\subseteq\N$.
\end{proposition}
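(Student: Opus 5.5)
The plan is to compute both radicals as intersections of primes, using the spectrum from Theorem~\ref{thm:spec}, and then handle idempotents coordinatewise.

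For the nilradical, $\Nil(R)$ is the intersection of all primes $P_\p$ ($\p\in\Spec A$) and $M_i$ ($i\in\N$). An element $(a,(c_j))$ lies in every $M_i$ exactly when all $c_j=0$. Eventual agreement then forces $\bar a=0$. It lies in every $P_\p$ exactly when $a\in\bigcap\p=\Nil(A)$. Since $\Nil(A)\subseteq\m$, the condition $\bar a=0$ is automatic, and we get $\Nil(R)=\{(a,0):a\in\Nil(A)\}$. One could also verify this directly, since nilpotence is coordinatewise and $\kk$ is reduced.

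For the Jacobson radical, the maximal ideals are $M_\infty=P_\m$ and the $M_i$. Intersecting gives all field coordinates zero and $a\in\m$, which is the displayed formula. The reducedness equivalence follows at once, because $\Nil(R)=0$ iff $\Nil(A)=0$.

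For idempotents, let $r=(a,(c_i))$ with $r^2=r$. Then $a$ is idempotent in the local ring $A$, so $a\in\{0,1\}$; this uses the standard fact that $a(1-a)=0$ and that $a$ or $1-a$ is a unit. Each $c_i\in\{0,1\}$ because $\kk$ is a field. If $a=0$, then $c_i=0$ for all but finitely many $i$, so $r=e_F$ with $F=\{i:c_i=1\}$ finite. If $a=1$, then $c_i=1$ eventually, and $r=1-e_F$ with $F=\{i:c_i=0\}$. Conversely, every such element is idempotent, so the list is complete.

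There is no serious obstacle. The only point needing care is the Jacobson radical: there I must use the complete list of maximal ideals from Theorem~\ref{thm:spec}, in particular that $M_\infty$ is the only maximal ideal containing $J$, rather than arguing with units.
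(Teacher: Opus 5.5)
Your proposal is correct and matches the paper's proof. The Jacobson radical is obtained by intersecting $M_\infty$ with the $M_i$ from Theorem~\ref{thm:spec}, and the idempotents are classified coordinatewise from the facts that a local ring has only the idempotents $0,1$ and that field coordinates eventually agree with $\bar a$. The only minor difference is that you compute $\Nil(R)$ as the intersection of all primes in that classification, whereas the paper checks nilpotence coordinatewise directly (the alternative you also mention), which does not need the full prime spectrum.
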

\begin{proof}
A nilpotent has nilpotent first coordinate and zero image in every field coordinate. Conversely a nilpotent $a\in A$ lies in $\m$, so $(a,0)\in R$ and is nilpotent. For the Jacobson radical, intersect the maximal ideals listed in Theorem~\ref{thm:spec}: membership in every $M_i$ forces all field coordinates to vanish, and the eventual-value condition then forces $a\in\m$.

If $(a,(c_i))$ is idempotent, the local ring $A$ has $a=0$ or $a=1$. Each $c_i$ is $0$ or $1$. If $a=0$, there are only finitely many coordinates equal to $1$, giving $e_F$. If $a=1$, there are only finitely many coordinates equal to $0$, giving $1-e_F$.
\end{proof}

\begin{proposition}[Two contrasting spectral subspaces]\label{prop:topology}
The maximal spectrum of $R$ is homeomorphic to the one-point compactification of the discrete space $\N$, with $M_\infty$ corresponding to the added point. If $A$ is a local domain which is not a field, then $\Min R$ is an infinite discrete, non-quasi-compact space.
\end{proposition}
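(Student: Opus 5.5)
For the maximal spectrum I would use Theorem~\ref{thm:spec}: $\Max R=\{M_\infty\}\sqcup\{M_i:i\in\N\}$. I define the bijection $\phi$ from the one-point compactification $\N\cup\{\infty\}$ by $i\mapsto M_i$ and $\infty\mapsto M_\infty$. Then I compare the topologies on basic open sets. Each $M_i$ is isolated in $\Spec R$ (Theorem~\ref{thm:spec}, since $D_R(e_i)=\{M_i\}$), so it is isolated in $\Max R$. For neighbourhoods of $M_\infty$, note that a basic open set $D_R(r)\cap\Max R$ with $r=(a,(c_i))$ contains $M_\infty$ exactly when $a\notin\m$, i.e.\ $\bar a\neq0$. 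In that case $c_i=\bar a\neq0$ for all but finitely many $i$, so $D_R(r)$ contains all but finitely many $M_i$; it is therefore cofinite. Conversely, for a finite $F$, the set $D_R(1-e_F)\cap\Max R$ equals $\{M_\infty\}\cup\{M_i:i\notin F\}$, so every cofinite set containing $M_\infty$ is open. This shows the open sets correspond exactly, and $\phi$ is a homeomorphism. (Compactness of $\Max R$ is then automatic, though it also follows from quasi-compactness of $\Spec R$ together with the fact that $\Max R$ here is closed: its complement is the set of nonmaximal $P_\p$, and $\Max R=V(\Jac R)$ would need checking, so I avoid that route.)

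For the minimal spectrum, assume $A$ is a local domain that is not a field. The minimal primes of $A$ reduce to $\{0\}$, so by Theorem~\ref{thm:spec} the minimal primes of $R$ are $P_0=J$ and the $M_i$. Here $M_i\not\subseteq P_0$ because $1-e_i\notin J$, and $P_0\subsetneq P_\m$ because $\m\neq0$. So I get $\Min R=\{P_0\}\cup\{M_i\}$, and I must show $P_0$ is isolated in $\Min R$. I pick $0\neq a\in\m$ and take $r=(a,0)\in R$, which is valid since $\bar a=0$. Then $r\in M_i$ for every $i$ while $a\notin 0$, so $D_R(r)\cap\Min R=\{P_0\}$. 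Hence every point is isolated and $\Min R$ is infinite and discrete. An infinite discrete space is not quasi-compact, since the cover by singletons has no finite subcover.

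The main point needing care is the neighbourhood analysis at $M_\infty$: showing that every open set containing $M_\infty$ is cofinite, and conversely that cofinite sets containing $M_\infty$ are open. The eventual-agreement condition delivers both directions.
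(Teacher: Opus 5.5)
Your proposal is correct and follows the paper's proof essentially step for step. You show each $M_i$ is isolated via $D_R(e_i)$, that basic neighbourhoods of $M_\infty$ are cofinite by eventual agreement, and conversely that the cofinite neighbourhoods are the sets $D_R(1-e_F)$; for $\Min R$ you isolate $J=P_{(0)}$ with $t=(a,0)$, $0\ne a\in\m$. The parenthetical about closedness of $\Max R$ is unnecessary and can be dropped, although $\Max R=V(\Jac R)$ does hold here by Proposition~\ref{prop:radicals}.
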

\begin{proof}
Each $M_i$ is isolated. A basic open neighborhood $D_R(r)$ of $M_\infty$ has first coordinate outside $\m$. Therefore its field coordinates are eventually nonzero, and the neighborhood contains all but finitely many $M_i$. Conversely, for finite $F$,
\[
 D_R(1-e_F)\cap\Max R
 =\{M_\infty\}\cup\{M_i:i\notin F\}.
\]
These are precisely the neighborhoods in the stated compactification.

If $A$ is a domain, the minimal primes are $J=P_{(0)}$ and all $M_i$. Choose $0\ne a\in\m$ and put $t=(a,0)$. Then
\[
 D_R(t)\cap\Min R=\{J\},\qquad D_R(e_i)\cap\Min R=\{M_i\}.
\]
Thus all points of $\Min R$ are open. The cover by these singletons has no finite subcover.
\end{proof}

\section{The projective support ideal and the Tor comparison}
\subsection{A flat quotient which is a localization}
\begin{proposition}\label{prop:flatquotient}
The ideal $J$ is projective and pure in $R$. The quotient $A=R/J$ is a flat $R$-module. More explicitly, if
\[
 S=\{1-e_F:F\subseteq\N\text{ finite}\},
\]
then $S$ is multiplicatively closed and $S^{-1}R\cong A$.
\end{proposition}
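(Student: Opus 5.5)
I will first establish the localization statement and then read off the rest from it. The set $S$ is multiplicatively closed because $e_Fe_G=e_{F\cap G}$, so
\[
(1-e_F)(1-e_G)=1-e_F-e_G+e_{F\cap G}=1-e_{F\cup G}.
\]
It also contains $1=1-e_\emptyset$. Since $\pi(1-e_F)=1$, the projection $\pi:R\to A$ sends $S$ to units and induces $\bar\pi:S^{-1}R\to A$. This map is surjective because the section $s$ splits $\pi$.

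For injectivity, suppose $r/(1-e_F)$ maps to $0$. Then $\pi(r)=0$, so $r\in J$, and $r$ has finite support $G$. As in the proof of Theorem~\ref{thm:spec}, $(1-e_G)r=0$, so $r/(1-e_F)=0$ in $S^{-1}R$. Hence $S^{-1}R\cong A$, and the kernel of $R\to S^{-1}R$ is exactly $J$. Localizations are flat, so $A=R/J$ is a flat $R$-module.

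Next I show that $J$ is projective. Each $e_iR$ is a direct summand of $R$, since $R=e_iR\oplus(1-e_i)R$, so it is projective. The $e_i$ are orthogonal idempotents and $J=\bigoplus_i \kk e_i$ with $e_iR=\kk e_i$. I will check that the sum $\sum_i e_iR$ inside $R$ is direct. If $\sum_{i\in F} r_ie_i=0$, multiplying by $e_j$ gives $r_je_j=0$. Thus $J\cong\bigoplus_i e_iR$ is a direct sum of projectives, hence projective.

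For purity, I will use the exact sequence $0\to J\to R\to R/J\to0$ together with the flatness of $R/J$ just established. Flatness gives $\Tor_1^R(R/J,N)=0$ for every $N$, so $J\otimes N\to R\otimes N$ stays injective, which is the definition of purity. Alternatively, one can use the standard fact that an ideal $I$ with $R/I$ flat is pure.

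I expect no genuine obstacle. The only point needing care is the injectivity of $S^{-1}R\to A$, where one must use the idempotent $1-e_G$ attached to the support of the numerator. That argument has already appeared in the computation $J_{P_\p}=0$.
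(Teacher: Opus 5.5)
Your proof is correct and follows the paper's argument step for step. It uses the same identity $(1-e_F)(1-e_G)=1-e_{F\cup G}$, kills the numerator with the idempotent attached to its support to get injectivity, gets flatness of $A$ from the localization $S^{-1}R\cong A$, gets projectivity from $J=\bigoplus_i e_iR$, and gets purity from the flatness of $R/J$; you only spell out details the paper leaves implicit, such as why the sum is direct and the $\Tor_1$ argument for purity.
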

\begin{proof}
Each $e_iR$ is a direct summand of $R$, and
\[
 J=\bigoplus_{i\in\N}e_iR.
\]
It is therefore projective. The identity
\[
 (1-e_F)(1-e_G)=1-e_{F\cup G}
\]
shows that $S$ is multiplicatively closed. Projection to $A$ sends every member of $S$ to $1$, and hence induces $S^{-1}R\to A$. It is surjective. If a fraction maps to zero, its numerator belongs to $J$ and is killed by $1-e_F$ for a suitable finite $F$. Thus the map is injective. Localization is flat, proving flatness of $A$ and purity of the kernel in $0\to J\to R\to A\to0$.
\end{proof}

\begin{remark}
The section $s:A\to R$ is a ring section. It does not say that $R\to A$ splits as an $R$-module map. In fact, we shall prove that the cyclic flat $R$-module $A$ is not projective. This distinction is important in every homological use of the construction.
\end{remark}

For an $R$-module $M$, define $M_A=A\otimes_RM=M/JM$ and $M_i=e_iM$. The latter is a vector space over $e_iR\cong\kk$. The identifications
\[
 M_{M_\infty}\cong M_A,\qquad M_{M_i}\cong e_iM
\]
follow from Theorem~\ref{thm:spec}. The first also follows from the localization description of $A$ and the fact that $A$ is local.

\begin{theorem}[Positive-degree Tor comparison]\label{thm:tor}
For all $R$-modules $M,N$ and all $i\geq1$, there is a natural isomorphism
\begin{equation}\label{eq:tor}
 \Tor_i^R(M,N)\cong\Tor_i^A(M_A,N_A),
\end{equation}
where the right-hand side is viewed as an $R$-module through $\pi$. In particular,
\begin{equation}\label{eq:fd}
 \fd_RM=\fd_A M_A
\end{equation}
for every $M$, with our convention for the zero module, and
\begin{equation}\label{eq:wd}
 \wgd R=\wgd A.
\end{equation}
\end{theorem}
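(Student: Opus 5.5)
The strategy is to decompose $\Tor^R_i(M,N)$ into its localization at the multiplicative set $S=\{1-e_F\}$ of Proposition~\ref{prop:flatquotient} and its $J$-torsion part. We then show that the torsion part vanishes in positive degrees.

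First, every $R$-module $X$ admits a natural map $X\to S^{-1}X=A\otimes_RX=X_A$, where the identification $S^{-1}X\cong A\otimes_R X$ comes from $S^{-1}R\cong A$. Apply this with $X=\Tor_i^R(M,N)$. Since $R\to A=S^{-1}R$ is flat, Lemma~\ref{lem:basechange} gives a natural isomorphism
\[
A\otimes_R\Tor_i^R(M,N)\cong\Tor_i^A(M_A,N_A).
\]
It therefore suffices to show that the natural map $\Tor_i^R(M,N)\to S^{-1}\Tor_i^R(M,N)$ is an isomorphism for $i\ge1$. Equivalently, every $x\in\Tor_i^R(M,N)$ satisfies $e_Fx=0$ for all finite $F$, or again $e_j\Tor_i^R(M,N)=0$ for each $j$. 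Granting this, injectivity holds because the kernel consists of elements killed by some $1-e_F$, and such an $x$ satisfies $x=e_Fx=0$. Surjectivity holds because $1-e_F$ acts as the identity, so it is invertible on the module.

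Second, we prove $e_j\Tor_i^R(M,N)=0$ for $i\ge1$. Multiplication by the idempotent $e_j$ on $\Tor$ agrees with the map induced by $e_j$ on $M$. Hence $e_j\Tor_i^R(M,N)$ is a direct summand of $\Tor_i^R(M,N)$, namely the localization at $e_j$:
\[
(\Tor_i^R(M,N))_{e_j}\cong\Tor_i^{R_{e_j}}(M_{e_j},N_{e_j}),
\]
again by Lemma~\ref{lem:basechange}, since $R\to R_{e_j}$ is flat. By Theorem~\ref{thm:spec}, $R_{e_j}=e_jR\cong\kk$ is a field, so positive-degree Tor over it vanishes. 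This step is straightforward. The point needing care is naturality: the isomorphism should be the composite of the localization map with the base-change map, so it is natural in both $M$ and $N$.

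Third, we deduce the dimension formulas. For \eqref{eq:fd}, use Lemma~\ref{lem:localfd} together with the list of maximal ideals in Theorem~\ref{thm:spec}:
\[
\fd_RM=\max\Bigl(\fd_{A}M_{M_\infty},\ \sup_i\fd_{\kk}M_{M_i}\Bigr)=\fd_AM_A,
\]
because $R_{M_\infty}\cong A$, $M_{M_\infty}\cong M_A$, and flat dimensions over a field are $0$. (Alternatively, one can derive it from \eqref{eq:tor}, since $\fd$ is detected by the vanishing of $\Tor_{n+1}(-,N)$ for all $N$.) The convention $\fd 0=0$ handles the case $M_A=0$.

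For \eqref{eq:wd}, the inequality $\wgd R\le\wgd A$ follows immediately from \eqref{eq:fd}. For the reverse inequality, take any $A$-module $L$ and view it as an $R$-module via $\pi$. Then $JL=0$, so $L_A=L$, and \eqref{eq:fd} gives $\fd_RL=\fd_AL$. Hence $\wgd R\ge\wgd A$.

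The only step expected to be delicate is the identification $A\otimes_R\Tor\cong\Tor^A$ via Lemma~\ref{lem:basechange}, which requires $A\cong S^{-1}R$ to be flat over $R$. Proposition~\ref{prop:flatquotient} already supplies this.
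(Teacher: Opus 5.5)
Your proposal is correct and follows the paper's proof essentially step for step. You show $e_j\Tor_i^R(M,N)=0$ by localizing at the idempotent $e_j$, that is, by base change to the field $e_jR\cong\kk$; you conclude that $\Tor_i^R(M,N)\to A\otimes_R\Tor_i^R(M,N)$ is an isomorphism, apply Lemma~\ref{lem:basechange} to the flat map $R\to A=S^{-1}R$, and get the weak-dimension equality by inflating $A$-modules exactly as the paper does. The only variation is that you derive \eqref{eq:fd} from Lemma~\ref{lem:localfd} and the list of maximal localizations, whereas the paper reads it off directly from the Tor isomorphism by testing against inflated $A$-modules, which is the alternative you also mention.
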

\begin{proof}
Let $T_i=\Tor_i^R(M,N)$ for $i\geq1$. Localizing at $e_j$ gives
\[
 e_jT_i\cong\Tor_i^{e_jR}(e_jM,e_jN)
          \cong\Tor_i^{\kk}(e_jM,e_jN)=0.
\]
Hence $JT_i=0$, so the natural quotient map $T_i\to A\otimes_RT_i$ is an isomorphism. Applying Lemma~\ref{lem:basechange} to the flat map $R\to A$ now gives \eqref{eq:tor}.

For any fixed $d\geq0$, if $\fd_A M_A\leq d$, the right-hand side of \eqref{eq:tor} vanishes for $i=d+1$ and all $N$, giving $\fd_RM\leq d$. Conversely, if $\fd_RM\leq d$, let $L$ be any $A$-module, viewed as an $R$-module through $\pi$. Then $L_A=L$, and \eqref{eq:tor} implies $\Tor_{d+1}^A(M_A,L)=0$. This proves \eqref{eq:fd}, including infinite dimensions. Finally, every $A$-module arises as $M_A$ by inflation, so taking suprema proves \eqref{eq:wd}.
\end{proof}

\begin{corollary}\label{cor:flatcriterion}
An $R$-module $M$ is flat if and only if $M/JM$ is flat over $A$. For every $M$, the submodule $JM$ is projective and
\[
 JM=\bigoplus_{i\in\N}e_iM.
\]
\end{corollary}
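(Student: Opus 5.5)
The corollary has two parts: the flatness criterion, and the structure of $JM$. I would prove the flatness criterion first and then describe $JM$.

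\emph{Flatness criterion.} Flatness means $\fd_RM=0$. By \eqref{eq:fd} of Theorem~\ref{thm:tor} this is equivalent to $\fd_A M_A=0$, that is, to $M/JM$ being flat over $A$. The zero-module convention is consistent on both sides, so there is nothing more to check.

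\emph{Structure of $JM$.} Since $J=\bigoplus_i e_iR$ by Proposition~\ref{prop:flatquotient}, we have $JM=\sum_i e_iJ M=\sum_i e_iM$. To see that the sum is direct, suppose $\sum_{i\in F}m_i=0$ with $m_i\in e_iM$ and $F$ finite. Multiplying by $e_j$ and using the orthogonality $e_je_i=0$ for $i\ne j$, together with $e_jm_j=m_j$ (since $m_j=e_jx$ and $e_j$ is idempotent), gives $m_j=0$. Hence $JM=\bigoplus_i e_iM$.

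\emph{Projectivity.} Each $e_iM$ is a $\kk$-vector space via $e_iR\cong\kk$, so it is a direct sum of copies of $e_iR$. The module $e_iR$ is a direct summand of $R$ because $R=e_iR\oplus(1-e_i)R$, so it is projective over $R$. The $R$-module structure on $e_iM$ factors through $R\to e_iR$; since $r e_i m = (re_i)e_i m$, the actions agree. Hence $e_iM\cong (e_iR)^{(X_i)}$ for a $\kk$-basis $X_i$, which is projective. A direct sum of projectives is projective, so $JM$ is projective.

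The only point needing care is the identification of $R$-modules that are $\kk$-vector spaces via $e_i$ with free $e_iR$-modules. This is immediate once one notes that $e_iR\cong\kk$ as rings, and that the map $R\to e_iR$, $r\mapsto re_i$, is the projection onto a ring factor.
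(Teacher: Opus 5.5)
Your proposal is correct and follows the paper's proof: the flatness criterion is read off from \eqref{eq:fd}, directness of the sum comes from multiplying by $e_j$, and projectivity comes from each $e_iM$ being a direct sum of copies of the projective summand $e_iR\cong\kk$. You also spell out a point the paper leaves implicit, namely that the $R$-action on $e_iM$ factors through $r\mapsto re_i$. That is what makes the identification $e_iM\cong(e_iR)^{(X_i)}$ valid as $R$-modules.
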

\begin{proof}
The first statement is \eqref{eq:fd}. Every element of $JM$ is a finite sum of elements from the $e_iM$. The sum is direct, as multiplication by $e_j$ extracts its $j$th summand. Each $e_iM$ is a $\kk$-vector space, hence is a direct sum of copies of the projective module $e_iR$. Arbitrary direct sums of projectives are projective.
\end{proof}

\begin{proposition}[Ext for inflated modules]\label{prop:ext}
For $A$-modules $L,N$ viewed as $R$-modules through $\pi$, there are natural isomorphisms
\[
 \Ext_R^i(L,N)\cong\Ext_A^i(L,N)\qquad(i\geq0).
\]
\end{proposition}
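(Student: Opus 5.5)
The plan is to compare the two Ext functors through a resolution of $L$ over $R$ that stays under control after base change to $A$. The key input is Proposition~\ref{prop:flatquotient}: $A=S^{-1}R$ is a localization of $R$, so $R\to A$ is a flat epimorphism. Since $J$ kills $N$ and every element of $S$ acts on $N$ as $1$, the module $N$ is naturally an $S^{-1}R$-module. I will therefore reduce the statement to the standard fact that Ext can be computed after localization when the target module is already local.

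\emph{Step 1: a resolution.} Take a projective resolution $P_\bullet\to L$ over $R$. Since $A=S^{-1}R$ is flat over $R$, the complex $A\otimes_RP_\bullet$ is a projective resolution of $A\otimes_RL$ over $A$. Because $J$ annihilates $L$, we have $A\otimes_RL=L/JL=L$.

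\emph{Step 2: adjunction.} For each projective $R$-module $P$ and each $A$-module $N$, extension of scalars gives
\[
\operatorname{Hom}_R(P,N)\cong\operatorname{Hom}_A(A\otimes_RP,N).
\]
This isomorphism is natural in $P$, so it holds termwise as an isomorphism of cochain complexes
\[
\operatorname{Hom}_R(P_\bullet,N)\cong\operatorname{Hom}_A(A\otimes_RP_\bullet,N).
\]
Taking cohomology, the left side gives $\Ext_R^i(L,N)$. By Step~1, the right side gives $\Ext_A^i(L,N)$.

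\emph{Step 3: naturality.} Naturality in $L$ and $N$ follows from the comparison theorem for resolutions, since the isomorphism of Step~2 is natural in both variables. The case $i=0$ says $\operatorname{Hom}_R(L,N)=\operatorname{Hom}_A(L,N)$, which is clear because $R\to A$ is surjective.

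The only step needing care is Step~1. It requires $A$ to be flat over $R$, which is exactly why the localization description in Proposition~\ref{prop:flatquotient} matters. The surjective section $s$ alone would not suffice, because $A$ is not a projective $R$-module, as the remark after Proposition~\ref{prop:flatquotient} notes. Once flatness is in hand, the argument is formal.
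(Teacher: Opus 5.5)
Your proposal is correct and follows the paper's proof. Flatness of $A=S^{-1}R$ makes $A\otimes_RP_\bullet$ a projective $A$-resolution of $A\otimes_RL=L$, and tensor--Hom adjunction identifies $\operatorname{Hom}_R(P_\bullet,N)$ with $\operatorname{Hom}_A(A\otimes_RP_\bullet,N)$ as cochain complexes. One minor wording slip: the ring section $s\colon A\to R$ is injective, and it is $\pi$ that is surjective.
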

\begin{proof}
Let $P_\bullet\to L$ be a projective $R$-resolution. Since $A$ is flat over $R$, $A\otimes_RP_\bullet$ is a projective $A$-resolution of $A\otimes_RL=L$. The tensor--Hom adjunction identifies the cochain complexes
\[
 \operatorname{Hom}_R(P_\bullet,N)
 \cong\operatorname{Hom}_A(A\otimes_RP_\bullet,N).
\]
Their cohomology gives the result. This formula concerns targets annihilated by $J$; it does not imply equality of projective dimensions over $R$ and $A$.
\end{proof}

\section{Coherence and projective dimensions}
\begin{theorem}[Exact coherence criterion]\label{thm:coherence}
The ring $\T(A)$ is coherent if and only if $A$ is a field. In all cases $\T(A)$ is non-Noetherian.
\end{theorem}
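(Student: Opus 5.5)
We treat the two directions separately and handle non-Noetherianity first, since it is immediate. The ideal $J=\bigoplus_{i\in\N}e_iR$ is not finitely generated: any finite set of elements of $J$ has supports inside some finite $F$, so it generates an ideal contained in $e_FR$, which misses $e_j$ for $j\notin F$. Alternatively, the strictly ascending chain $e_{\{1\}}R\subsetneq e_{\{1,2\}}R\subsetneq\cdots$ already shows that $R$ is not Noetherian.

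\emph{If $A=\kk$ is a field.} Then $\m=0$, and $R$ is the ring $C(\kk)$ of eventually constant sequences. Every element of this ring has a square which divides it: $r=r^2u$ with $u$ the coordinatewise quasi-inverse, which is again eventually constant. Hence $R$ is von Neumann regular. Alternatively, Theorem~\ref{thm:tor} gives $\wgd R=\wgd\kk=0$. Lemma~\ref{lem:zero} then yields coherence.

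\emph{If $A$ is not a field.} We use the necessary condition recorded before Lemma~\ref{lem:zero}, namely that in a coherent ring every principal annihilator is finitely generated. Pick $0\ne a\in\m$ and set $t=(a,0)\in R$; this lies in $R$ because $\bar a=0$. We claim $\Ann_R(t)\supseteq J$, and in fact
\[
 \Ann_R(t)=\{(b,(c_i)):ab=0\}.
\]
Indeed $t\cdot(b,(c_i))=(ab,0)$, so the field coordinates impose no condition. This annihilator contains $J$, and its image under $\pi$ is $\Ann_A(a)$. Suppose it were generated by finitely many elements $r_1,\dots,r_k$. Each $r_j$ has first coordinate $b_j\in\Ann_A(a)\subseteq\m$, because $a\neq0$ forces $b_j$ to be a nonunit. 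So each $r_j$ has field coordinates eventually equal to $\bar b_j=0$. Choose a finite $F$ containing all their exceptional sets. Then every element of the ideal $(r_1,\dots,r_k)$ has vanishing $i$th field coordinate for $i\notin F$. This contradicts $e_i\in\Ann_R(t)$ for $i\notin F$. Hence $\Ann_R(t)$ is not finitely generated, and $R$ is not coherent.

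The main point needing care is the observation that every generator has first coordinate in $\m$. This step is exactly where the hypothesis $a\neq0$ enters: if $\Ann_A(a)$ contained a unit, the generators could have eventually nonzero coordinates. Since $a\ne0$, this cannot happen. The argument works even when $A$ is a domain, where $\Ann_A(a)=0$ and $\Ann_R(t)=J$ exactly.
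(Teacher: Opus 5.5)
Your proposal is correct and follows essentially the same route as the paper. The non-Noetherian and non-coherent directions both rest on the finite-support argument applied to generators (of $J$, respectively of $\Ann_R(t)$ for $t=(a,0)$ with $0\ne a\in\m$), and the field case uses the same explicit quasi-inverse to show that $\T(\kk)$ is von Neumann regular; your extra remarks (the chain $e_{\{1\}}R\subsetneq e_{\{1,2\}}R\subsetneq\cdots$ and the alternative via $\wgd R=\wgd\kk=0$) are valid but not needed.
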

\begin{proof}
The ideal $J$ is never finitely generated. A finite set of its elements is supported on a common finite set $F$, and every linear combination over $R$ remains supported on $F$. Such elements cannot generate $e_j$ for $j\notin F$. Thus $R$ is non-Noetherian.

Suppose $A$ is not a field. Choose $0\ne a\in\m$, and put $t=(a,0)\in R$. An element $(b,(d_i))$ annihilates $t$ precisely when $ab=0$. Since $a\ne0$, every such $b$ lies in $\m$: a unit cannot annihilate a nonzero element. Consequently every annihilator has a finite-support field sequence. The ideal $\Ann_R(t)$ contains all $e_i$. If it had finitely many generators, their field supports would have a finite union $F$, and every element of the generated ideal would have field support contained in $F$. This contradicts $e_j\in\Ann_R(t)$ for $j\notin F$. Therefore $R$ is not coherent. This argument does not require $a$ to be regular in $A$.

If $A$ is a field, take $r=(a,(c_i))$ and define $b=0$ when $a=0$, and $b=a^{-1}$ otherwise. Define $d_i=0$ when $c_i=0$, and $d_i=c_i^{-1}$ otherwise. The pair $u=(b,(d_i))$ belongs to $R$ and satisfies $r=r^2u$. Thus $R$ is von Neumann regular. Its finitely generated ideals are idempotent-generated, hence finitely presented, as in Lemma~\ref{lem:zero}.
\end{proof}

\begin{proposition}\label{prop:pdA}
The $R$-module $A=R/J$ is flat, cyclic, not finitely presented, and has projective dimension exactly one.
\end{proposition}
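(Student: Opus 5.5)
Flatness is already Proposition~\ref{prop:flatquotient}, and cyclicity is clear since $A=R/J$. The remaining claims follow from the structure of $J$ established in Section~5. The plan is to use the short exact sequence
\[
 0\longrightarrow J\longrightarrow R\longrightarrow A\longrightarrow 0 .
\]

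\emph{Not finitely presented.} A cyclic module $R/I$ is finitely presented if and only if $I$ is finitely generated. The proof of Theorem~\ref{thm:coherence} shows that $J$ is never finitely generated, since finitely many elements of $J$ are supported on a common finite set $F$ and cannot generate $e_j$ for $j\notin F$. Hence $A$ is not finitely presented.

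\emph{Upper bound $\pd_RA\le 1$.} By Proposition~\ref{prop:flatquotient} (or Corollary~\ref{cor:flatcriterion} with $M=R$), $J=\bigoplus_i e_iR$ is projective. The sequence above is therefore a projective resolution of $A$ of length one.

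\emph{Lower bound: $A$ is not projective.} Suppose $A$ were projective. Then the surjection $R\to A$ would split as an $R$-module map, so $J$ would be a direct summand of $R$. A direct summand of $R$ is cyclic, generated by an idempotent, which would make $J$ finitely generated, contradicting the first step.

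As an independent check, a splitting $A\to R$ would send $1$ to an element $r$ with $Jr=0$ and $\pi(r)=1$. But $e_ir=c_ie_i$, so all $c_i=0$, which contradicts the requirement $c_i=\bar a=1$ eventually. This element-level argument is the cleaner route, and I would use it as the written proof.

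I do not expect a real obstacle. The only care needed is to distinguish the ring section $s$ from an $R$-module splitting, as the preceding remark stresses, and to note that the section $s$ is not $R$-linear.
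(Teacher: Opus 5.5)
Your proof is correct and follows the paper's argument step for step. Non-finite-presentation comes from $J$ not being finitely generated, $\pd_RA\le 1$ comes from projectivity of $J$, and nonprojectivity holds because a splitting would make $J$ an idempotent-generated, hence finitely generated, direct summand. Your element-level variant is also valid: an $R$-linear section would send $1$ to some $r$ with $Jr=0$ and $\pi(r)=1$, forcing every $c_i=0$, against $c_i=\bar 1$ eventually. It is just a more direct form of the same splitting argument.
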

\begin{proof}
Flatness was proved in Proposition~\ref{prop:flatquotient}. Since $J$ is not finitely generated, the cyclic quotient $R/J$ is not finitely presented. If it were projective, $0\to J\to R\to A\to0$ would split, making $J$ a direct summand ideal of $R$. Such an ideal is generated by one idempotent, contradicting the failure of finite generation. Finally, the same exact sequence has projective first two terms, so $\pd_RA\leq1$. Nonprojectivity gives equality.
\end{proof}

\begin{proposition}[An explicit cyclic module]\label{prop:cyclic}
Suppose $A$ contains a regular nonunit $a$. Let $t=(a,0)$ and $C=R/tR$. Then
\[
 \fd_RC=1,\qquad \pd_RC=2.
\]
The module $C$ is finitely presented, but its first syzygy $tR$ is not finitely presented.
\end{proposition}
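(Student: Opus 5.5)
We work throughout with the exact sequence
\[
 0\longrightarrow tR\longrightarrow R\longrightarrow C\longrightarrow0 .
\]
Since $C=R/tR$ is a cyclic module presented by one relation, it is finitely presented.

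\emph{Flat dimension.} By \eqref{eq:fd}, $\fd_RC=\fd_A C_A$, and $C_A=A\otimes_R R/tR=A/aA$. Because $a$ is a regular element of $A$, the sequence $0\to A\xrightarrow{a}A\to A/aA\to0$ is exact, so $\fd_A(A/aA)\le1$. Tensoring with $\kk$ and using $a\in\m$ shows $\Tor_1^A(A/aA,\kk)\cong\kk\ne0$. Hence $\fd_RC=1$.

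\emph{The syzygy $tR$.} Multiplication by $t$ gives an isomorphism $tR\cong R/\Ann_R(t)$. An element $(b,(d_i))$ annihilates $t$ exactly when $ab=0$, and regularity of $a$ forces $b=0$. The field coordinates impose no condition, because $t$ has zero field part. So $\Ann_R(t)=J$, and therefore $tR\cong R/J=A$. By Proposition~\ref{prop:pdA}, $A$ has projective dimension exactly one and is not finitely presented. This gives the claim about the first syzygy. It also gives $\pd_RC\le 1+\pd_R(tR)=2$.

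\emph{Projective dimension two.} It remains to exclude $\pd_RC\le1$. Suppose it held. Then $tR$ would be projective, by Schanuel's lemma applied to the free presentation above. But $tR\cong A$ is not projective, a contradiction. Hence $\pd_RC=2$.

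I expect no serious obstacle. The only points that need care are the annihilator computation, which is where regularity of $a$ enters, and the Schanuel step. Schanuel's lemma is the form that applies directly, since it does not need finite generation.
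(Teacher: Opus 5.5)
Your proposal is correct and follows the paper's proof essentially step for step: $\Ann_R(t)=J$ gives $tR\cong A$, Proposition~\ref{prop:pdA} then makes the syzygy non-finitely-presented and non-projective, Schanuel's lemma forces $\pd_RC=2$, and \eqref{eq:fd} reduces the flat dimension to $\fd_A(A/aA)=1$. The only difference is cosmetic: your nonvanishing witness is $\Tor_1^A(A/aA,\kk)\cong\kk$, where the paper uses $\Tor_1^A(A/aA,A/aA)\cong A/aA$.
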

\begin{proof}
Since $a$ is regular in $A$, $\Ann_R(t)=J$. Therefore $tR\cong A$ as $R$-modules, with the action on $A$ through $\pi$. Proposition~\ref{prop:pdA} shows that $tR$ is flat, nonprojective, and of projective dimension one. In particular,
\[
 0\longrightarrow J\longrightarrow R\xrightarrow{\cdot t}R
   \longrightarrow C\longrightarrow0
\]
is a projective resolution of length two. If $\pd_RC\leq1$, then the kernel $tR$ of the displayed surjection $R\to C$ would be projective, by dimension shifting (or Schanuel's lemma). Hence $\pd_RC=2$.

The module $C$ is finitely presented because it has one generator and the single relation $t$. Its localization at $M_\infty$ is $A/aA$. The sequence $0\to A\xrightarrow{\cdot a}A\to A/aA\to0$ and regularity of $a$ give
\[
 \Tor_1^A(A/aA,A/aA)\cong A/aA\ne0.
\]
Thus $\fd_A(A/aA)=1$, and Theorem~\ref{thm:tor} gives $\fd_RC=1$. Finally $tR\cong A$ is not finitely presented by Proposition~\ref{prop:pdA}.
\end{proof}

\begin{proposition}[Global-dimension bounds]\label{prop:global}
If $\gld A=d<\infty$, then
\[
 \max\{d,1\}\leq\gld R\leq d+1.
\]
In particular, $\gld\T(k)=1$ for a field $k$. If $A$ is a discrete valuation domain which is not a field, then $\gld\T(A)=2$.
\end{proposition}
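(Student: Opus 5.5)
Everything rests on the exact sequence $0\to JM\to M\to M_A\to0$ for an arbitrary $R$-module $M$. By Corollary~\ref{cor:flatcriterion}, $JM$ is projective. Hence $\pd_R M\le\max\{\pd_R M_A,\ \pd_R JM\}=\pd_R M_A$ whenever $M_A\neq0$, and $\pd_R M=0$ otherwise. So it suffices to bound $\pd_R L$ for $A$-modules $L$ inflated through $\pi$.

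\textbf{Upper bound.} Take a projective $A$-resolution of $L$ of length $\le d$. Each projective $A$-module is a direct summand of a free module $A^{(X)}$. Over $R$ we have $\pd_R A^{(X)}\le1$ by Proposition~\ref{prop:pdA}, since direct sums preserve projective dimension bounds. So every projective $A$-module has $\pd_R\le1$. A standard dimension-shifting argument along the resolution then gives $\pd_R L\le d+1$: for example, inductively $\pd_R(\text{syzygy})\le1+\text{remaining length}$ using the long exact Ext sequence. Therefore $\gld R\le d+1$.

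\textbf{Lower bound.} By Proposition~\ref{prop:ext}, $\Ext^i_R(L,N)\cong\Ext^i_A(L,N)$ for $A$-modules $L,N$. Choose $L,N$ with $\Ext^d_A(L,N)\ne0$, which is possible because $\gld A=d$ is attained. Then $\pd_R L\ge d$. Also $\gld R\ge1$ because $A$ is a nonprojective $R$-module by Proposition~\ref{prop:pdA}. Together these give $\max\{d,1\}\le\gld R$.

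\textbf{Special cases.} For a field $k$ we have $d=0$, so $\gld\T(k)=1$ by both bounds. For a DVR $A$ we have $d=1$, so $\gld R\le2$. The lower bound $2$ is Proposition~\ref{prop:cyclic}, applied with $a$ a uniformizer, which is a regular nonunit; it gives $\pd_R C=2$.

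The step I expect to need the most care is the upper-bound induction. There one must check that modules of projective dimension $\le1$ suffice as the building blocks. The plan is to use the resolution $0\to P_d\to\cdots\to P_0\to L\to0$ and split it into short exact sequences. The inequality $\pd_R K_{j}\le\max(\pd_R P_j,\ \pd_R K_{j+1}+1)$ then yields $\pd_R L\le d+1$ by descending induction, starting from $\pd_R K_{d}=\pd_R P_d\le 1$.
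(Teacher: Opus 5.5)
Your proposal is correct and follows the paper's proof: the same reduction through $0\to JM\to M\to M_A\to0$ with $JM$ projective, the same bound $\pd_R\le1$ for $A$-projectives followed by dimension shifting, and the same witnesses $\pd_R A=1$ and Proposition~\ref{prop:cyclic}. The only difference is the lower bound $d$. You get it from the Ext comparison of Proposition~\ref{prop:ext}, which is legitimate because you only need $\pd_R L\ge d$, not equality. The paper instead localizes an $R$-projective resolution of an inflated $A$-module at $M_\infty$.
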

\begin{proof}
The lower bound $d$ follows by localizing projective resolutions at $M_\infty$; every $A$-module can be regarded as an $R$-module. The lower bound $1$ follows from $\pd_RA=1$.

For the upper bound, an $A$-projective module is a direct summand of a direct sum of copies of $A$, and hence has $R$-projective dimension at most one. An $A$-projective resolution of an $A$-module $L$ of length at most $d$, together with successive dimension shifting, gives $\pd_RL\leq d+1$. For any $R$-module $M$, the exact sequence
\[
 0\longrightarrow JM\longrightarrow M\longrightarrow M_A\longrightarrow0
\]
has projective first term by Corollary~\ref{cor:flatcriterion}. Therefore $\pd_RM\leq\max\{0,\pd_RM_A\}\leq d+1$. The field case follows immediately. A discrete valuation domain has global dimension one, so the upper bound is two; Proposition~\ref{prop:cyclic}, applied to a uniformizer, attains it.
\end{proof}

\section{Realization of all nonnegative integers}
Fix a field $k$ and let
\[
 B_n=k[x_1,\ldots,x_n],\qquad
 A_n=(B_n)_{(x_1,\ldots,x_n)}.
\]
For $n=0$, these both mean $k$. Set $R_n=\T(A_n)$. We now give an explicit proof of the numerical assertion rather than only invoking the general construction.

\begin{lemma}\label{lem:An}
For every $n\geq0$, $\wgd A_n=n$. For $n\geq1$, the residue field $k$ satisfies
\[
 \Tor_i^{A_n}(k,k)\cong\bigwedge\nolimits_k^i k^n
 \quad(0\leq i\leq n),
\]
and the higher Tor groups vanish.
\end{lemma}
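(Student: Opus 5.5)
The plan is to use the Koszul complex on the regular sequence $x_1,\ldots,x_n$ in $A_n$. The case $n=0$ is immediate: $A_0=k$ is a field, every module is free, and $\wgd A_0=0$. For $n\geq1$, the elements $x_1,\ldots,x_n$ form a regular sequence in the local ring $A_n$. This holds because they are regular in $B_n$ and localization is flat, so the Koszul complex $K_\bullet=K_\bullet(x_1,\ldots,x_n;A_n)$ is a free resolution of $A_n/(x_1,\ldots,x_n)=k$ of length $n$, with $K_i=\bigwedge^iA_n^n$.

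To compute $\Tor$, I will tensor $K_\bullet$ with $k$. All Koszul differentials have entries $\pm x_j\in\m$, so they become zero after reduction modulo $\m$. Hence $\Tor_i^{A_n}(k,k)=H_i(K_\bullet\otimes k)=\bigwedge^i_k k^n$ for $0\le i\le n$, and the groups vanish for $i>n$ because $K_i=0$ there. In particular $\Tor_n^{A_n}(k,k)\cong k\neq0$, so $\fd_{A_n}k\geq n$ and therefore $\wgd A_n\geq n$.

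For the upper bound I will use that $A_n$ is a regular local ring of dimension $n$. By Serre's theorem $\gld A_n=n$, and since $\wgd\le\gld$ this gives $\wgd A_n\le n$. If I want to stay closer to Tor, I can argue as follows instead. Weak dimension is detected by $\Tor$ against cyclic modules $A_n/I$. For a Noetherian local ring, finitely generated modules have $\fd=\pd$, and the Auslander--Buchsbaum–Serre argument via the minimal resolution gives $\pd M\le n$, because $\Tor_{n+1}(M,k)=0$ (compute it with the Koszul resolution of $k$). I expect this to be the only real input, namely citing regularity and global dimension $n$ (Serre, e.g.\ \cite{Matsumura}). Everything else is the routine Koszul computation above.
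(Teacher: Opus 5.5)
Your proof is correct and matches the paper's argument for the Tor computation and the lower bound: the Koszul complex on the regular sequence $x_1,\ldots,x_n$ resolves $k$, its differentials vanish after $-\otimes_{A_n}k$, and $\Tor_n^{A_n}(k,k)\cong k\neq0$. The only difference is the upper bound. The paper cites Hilbert's syzygy theorem $\gld B_n=n$ and localizes, while you cite Serre's theorem for the regular local ring $A_n$, or rerun the minimal-resolution argument: reduce to cyclic modules $A_n/I$ where $\fd=\pd$, and note $\Tor_i^{A_n}(-,k)=0$ for $i>n$ via the Koszul resolution. Both routes are valid, and your second variant has the small merit that one Koszul resolution gives both bounds.
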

\begin{proof}
Hilbert's syzygy theorem gives $\gld B_n=n$; equivalently, every $B_n$-module has a projective resolution of length at most $n$. Localization gives $\gld A_n\leq n$, and thus $\wgd A_n\leq n$.

For $n\geq1$, the sequence $x_1,\ldots,x_n$ is regular in $A_n$. Indeed, modulo its first $j$ members, the quotient is the corresponding localized polynomial domain in the remaining variables, and the next variable is nonzero. Its Koszul complex $K_\bullet(x_1,\ldots,x_n;A_n)$ is a finite free resolution of $k$. After tensoring with $k$, all differentials become zero because their entries are among the $x_i$. Its $i$th term becomes $\bigwedge_k^i k^n$, proving the formula. In particular $\Tor_n^{A_n}(k,k)\cong k\ne0$. This supplies the opposite inequality. The case $n=0$ is the field case.
\end{proof}

\begin{theorem}[Realization theorem]\label{thm:realization}
For every integer $n\geq0$,
\[
 R_n=\left\{(a,(c_i))\in A_n\times k^{\N}:
            c_i=a\bmod(x_1,\ldots,x_n)\text{ eventually}\right\}
\]
is a reduced total ring of quotients with
\[
 \wgd R_n=\dim R_n=n.
\]
For $n\geq1$ it is non-coherent. For $n=0$ it is coherent and von Neumann regular. Every $R_n$ is non-Noetherian.
\end{theorem}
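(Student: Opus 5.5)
The proof is an assembly of earlier results specialized to $A=A_n$. The first step is to identify the displayed ring with $\T(A_n)$. The residue field of $A_n$ is $A_n/(x_1,\ldots,x_n)A_n\cong k$, and the residue map sends $a$ to $a\bmod(x_1,\ldots,x_n)$, i.e.\ to its value at the origin. Hence the description in the statement is exactly $\T(A_n)$ from Theorem~\ref{thm:main}, with $\kk=k$. For $n=0$ the condition reads $c_i=a$ eventually.

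Next I would verify the listed properties one at a time.
\begin{itemize}
\item \emph{Total ring of quotients.} $Q(R_n)=R_n$ follows from Proposition~\ref{prop:units}.
\item \emph{Reducedness.} $A_n$ is a localization of a polynomial domain, hence a domain, hence reduced. Proposition~\ref{prop:radicals} then gives that $R_n$ is reduced.
\item \emph{Weak global dimension.} Theorem~\ref{thm:tor}, equation~\eqref{eq:wd}, gives $\wgd R_n=\wgd A_n$, and Lemma~\ref{lem:An} gives $\wgd A_n=n$.
\item \emph{Krull dimension.} Corollary~\ref{cor:dimension} gives $\dim R_n=\dim A_n$. The height of $(x_1,\ldots,x_n)$ in $k[x_1,\ldots,x_n]$ is $n$, so $\dim A_n=n$. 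This is standard; alternatively, the chain $0\subset(x_1)\subset\cdots\subset(x_1,\ldots,x_n)$ gives $\geq n$, and regularity gives equality.
\end{itemize}

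For coherence I would use Theorem~\ref{thm:coherence}. For $n\geq1$, $A_n$ is not a field because $x_1$ is a nonzero nonunit, so $R_n$ is non-coherent. For $n=0$, $A_0=k$ is a field, so $R_0$ is von Neumann regular and hence coherent. Non-Noetherianity in every case is the second sentence of Theorem~\ref{thm:coherence}.

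There is no real obstacle here. The only step needing care is the lower bound $\wgd R_n\geq n$. I would make it explicit with the Koszul witness: regard $k$ as an $R_n$-module through $\pi$. Since $k_A=k$, equation~\eqref{eq:tor} gives
\[
\Tor_n^{R_n}(k,k)\cong\Tor_n^{A_n}(k,k)\cong k\neq0,
\]
using Lemma~\ref{lem:An}. This makes the lower bound concrete rather than relying only on the supremum argument.
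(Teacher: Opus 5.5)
Your proposal is correct and follows the paper's proof essentially verbatim. The paper assembles the same results: Proposition~\ref{prop:units}, Proposition~\ref{prop:radicals}, Theorem~\ref{thm:tor} with Lemma~\ref{lem:An}, Corollary~\ref{cor:dimension}, and Theorem~\ref{thm:coherence}; your explicit Koszul witness $\Tor_n^{R_n}(k,k)\cong k$ (needed only for $n\geq1$, since the bound $\geq 0$ is trivial) is what the paper records separately as Corollary~\ref{cor:koszulR}.
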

\begin{proof}
The total quotient property is Proposition~\ref{prop:units}. Reducedness follows from the fact that $A_n$ is a domain and Proposition~\ref{prop:radicals}. The weak-dimension formula follows from Theorem~\ref{thm:tor} and Lemma~\ref{lem:An}. The Krull-dimension formula follows from Corollary~\ref{cor:dimension} and $\dim A_n=n$. The coherence and Noetherian assertions follow from Theorem~\ref{thm:coherence}.
\end{proof}

\begin{corollary}[Explicit witnesses in every positive degree]\label{cor:koszulR}
Regard $k$ as an $R_n$-module through $R_n\to A_n\to k$. Then, for $1\leq i\leq n$,
\[
 \Tor_i^{R_n}(k,k)\cong k^{\binom ni}.
\]
In particular $\fd_{R_n}k=n$ for $n\geq1$.
\end{corollary}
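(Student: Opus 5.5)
The plan is to apply Theorem~\ref{thm:tor} with $M=N=k$, both inflated along $R_n\to A_n\to k$, and then read off the answer from Lemma~\ref{lem:An}. First we compute $M_A$. Since $k$ is regarded as an $R_n$-module through $\pi$ followed by $A_n\to k$, the ideal $J$ acts on it by zero: every element of $J$ has $A_n$-coordinate $0$. Hence $JM=0$, and $M_A=M/JM=k$ is just the residue field viewed as an $A_n$-module. In other words, $k$ is an $A_n$-module inflated to $R_n$, and the identity $L_A=L$ for inflated modules, already used in the proof of Theorem~\ref{thm:tor}, applies to it.

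For $i\geq1$, the isomorphism \eqref{eq:tor} therefore gives
\[
 \Tor_i^{R_n}(k,k)\cong\Tor_i^{A_n}(k,k).
\]
By Lemma~\ref{lem:An}, the right-hand side is $\bigwedge_k^i k^n$ for $1\leq i\leq n$, and this has $k$-dimension $\binom ni$. So it is isomorphic to $k^{\binom ni}$, which proves the displayed formula. The identification is one of $R_n$-modules through $\pi$, so it is in particular an isomorphism of $k$-vector spaces.

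For the flat dimension, \eqref{eq:fd} gives $\fd_{R_n}k=\fd_{A_n}k$. The Koszul complex in Lemma~\ref{lem:An} is a free resolution of length $n$, so $\fd_{A_n}k\leq n$. On the other hand, $\Tor_n^{A_n}(k,k)\cong k\neq0$, so $\fd_{A_n}k\geq n$. Together these give $\fd_{R_n}k=n$. Alternatively, one can argue directly over $R_n$: $\Tor_n^{R_n}(k,k)\cong k\neq0$ gives the lower bound, and $\wgd R_n=n$ from Theorem~\ref{thm:realization} gives the upper bound.

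No real obstacle is expected here. The only point needing care is checking that $J$ annihilates $k$, so that $k_A=k$. This is immediate because elements of $J$ have first coordinate zero, and their field coordinates play no role in the action on $k$.
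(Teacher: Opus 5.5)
Your proposal is correct and follows the paper's proof: Theorem~\ref{thm:tor} applies with $k_A=k$, since $J$ annihilates $k$, and reduces the claim to the Koszul computation of Lemma~\ref{lem:An}; the flat dimension then follows from the nonzero top Tor group together with $\wgd R_n=n$. Your alternative route through \eqref{eq:fd} and the length-$n$ Koszul resolution is equally valid and slightly more self-contained.
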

\begin{proof}
Apply the positive-degree Tor comparison and the Koszul calculation. The upper bound on flat dimension is Theorem~\ref{thm:realization}; the nonzero top Tor group proves equality.
\end{proof}

\subsection{The cases zero, one, and two}
For $n=0$, $R_0$ is isomorphic to the ring of eventually constant $k$-valued sequences: the first coordinate is recovered from the eventual value. It is a non-Noetherian von Neumann regular ring. Taking $R=k$ would also answer the realization question at zero, but $R_0$ keeps the construction uniform. No non-coherent example at weak dimension zero can exist by Lemma~\ref{lem:zero}.

For $n=1$, the ring $A_1=k[x]_{(x)}$ is a discrete valuation domain. Hence $R_1$ is a reduced, non-coherent total ring of quotients of weak global dimension one and global dimension two. Its principal ideal generated by $(x,0)$ is flat but not projective.

For $n=2$, one obtains
\[
 R_2=\left\{(a,(c_i))\in k[x,y]_{(x,y)}\times k^{\N}:
                  c_i=a\bmod(x,y)\text{ eventually}\right\}.
\]
The complete list of maximal localizations is
\[
 (R_2)_{M_\infty}\cong k[x,y]_{(x,y)},\qquad
 (R_2)_{M_i}\cong k.
\]
The dimension-two lower bound is the concrete calculation
\[
 \Tor_2^{R_2}(k,k)\cong\Tor_2^{k[x,y]_{(x,y)}}(k,k)\cong k.
\]
The non-coherence witness is equally concrete:
\[
 \Ann_{R_2}((x,0))=J=\bigoplus_{i\geq1}ke_i,
\]
which is not finitely generated. These two calculations are independent: non-coherence does not by itself establish the weak dimension, and the Tor calculation does not replace the annihilator argument.

\begin{remark}[Countable examples]
If $k$ is countable, then each $A_n$ is countable. A sequence occurring in $R_n$ is specified by its first coordinate, a finite exceptional set, and finitely many field values. Hence every $R_n$ is countable. The construction therefore realizes every finite weak dimension even among countable total rings of quotients.
\end{remark}

\subsection{An infinite-dimensional extension}
\begin{example}\label{ex:infinite}
Let
\[
 A_\infty=k[x_1,x_2,\ldots]_{(x_1,x_2,\ldots)}.
\]
For every $r\geq1$, the first $r$ variables form a regular sequence. If $L_r=A_\infty/(x_1,\ldots,x_r)$, the Koszul resolution gives
\[
 \Tor_r^{A_\infty}(L_r,L_r)\cong L_r\ne0.
\]
Thus $\wgd A_\infty=\infty$. It follows that $\T(A_\infty)$ is a reduced non-coherent total ring of quotients of infinite weak global dimension. This extends the realization range to $\N\cup\{\infty\}$.
\end{example}

\section{Pr\"ufer conditions and localization}
\subsection{Semi-regular ideals and finite annihilators}
We use the convention that a \emph{regular ideal} if it contains a non-zero-divisor; a   \emph{semi-regular ideal} if it contains a finitely generated sub-ideal whose annihilator is zero. 
A ring is Pr\"ufer (resp., strongly Pr\"ufer) if each finitely generated regular (resp., semi-regular) ideal is projective. This is the convention in \cite{BG,GS}.

\begin{theorem}\label{thm:strong-prufer}
Every proper finitely
	generated ideal of $\T(A)$  has a nonzero annihilator. Consequently, the only
	semi-regular ideal of $\T(A)$  is $\T(A)$  itself, and $\T(A)$  is a strong
	Pr\"ufer ring, and thus a Pr\"ufer ring.
\end{theorem}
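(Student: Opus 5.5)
The plan is to reduce everything to the first assertion: a proper finitely generated ideal $I=(r_1,\ldots,r_m)$ of $R=\T(A)$ has a nonzero annihilator. I will produce an explicit annihilator of the form $e_j$.

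Write $r_k=(a_k,(c^{(k)}_i))$. Since $I$ is proper, it lies in some maximal ideal. By Theorem~\ref{thm:spec} this is either some $M_j$ or $M_\infty$.

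\emph{Case $I\subseteq M_j$.} Every $r_k$ has $j$th coordinate $0$. By Proposition~\ref{prop:structure}, $r_ke_j=c^{(k)}_je_j=0$, so $e_j\neq0$ annihilates $I$.

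\emph{Case $I\subseteq M_\infty$.} Each $a_k\in\m$, so $\bar a_k=0$, and each $r_k$ has field coordinates vanishing outside a finite set $F_k$. The union $F=\bigcup_kF_k$ is finite because there are finitely many generators; this is the only point where finite generation is used. Choose $j\notin F$. Then $e_jr_k=0$ for all $k$, so $e_j$ annihilates $I$.

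Alternatively, avoid the case split: if $I$ is proper, then by Proposition~\ref{prop:units} no element of $I$ is a unit. Some single coordinate position $j$ must be zero for all generators. Otherwise every position is nonzero for some generator, and $\sum_k r_k\bar{\ }$-type combinations (e.g.\ $\sum_k r_k^{*}r_k$ with coordinatewise conjugate inverses) would produce a unit. The case analysis via maximal ideals is cleaner, and I would use it.

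Next, the consequences. Let $I$ be a semi-regular ideal, so it contains a finitely generated $I_0$ with $\Ann(I_0)=0$. By the first part, $I_0$ cannot be proper, so $I_0=R$ and hence $I=R$. Thus the only finitely generated semi-regular ideal is $R$, which is free and hence projective. This makes $R$ strongly Prüfer. A regular ideal contains a non-zero-divisor $x$, and $(x)$ has zero annihilator, so regular ideals are semi-regular. Hence strongly Prüfer implies Prüfer; this also follows directly from Proposition~\ref{prop:units}.

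The main (mild) obstacle is the $M_\infty$ case, where finite generation must be used to get a common vanishing coordinate. Without it the argument fails: $J$ itself is proper with zero annihilator, as $\Ann(J)=0$ when $A$ is a domain. So I would stress that finite generation is essential in that step.
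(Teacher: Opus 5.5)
Your main argument is correct and is essentially the paper's proof: the case split $I\subseteq M_j$ versus $I\subseteq M_\infty$, the finite union of supports giving a common zero coordinate $j$ so that $e_j$ annihilates $I$, and the conclusion that a finitely generated dense ideal must be $R$. Two side remarks are wrong, though nothing depends on them: $\Ann_R(J)=\{(a,0):a\in\m\}$ is nonzero whenever $A$ is not a field, so use $M_\infty$ as the example instead, since its annihilator is zero when $A$ is a domain; and the sum $\sum_k r_k^*r_k$ can vanish in a coordinate when the residue characteristic is $p>0$, so your alternative would need something like $1-\prod_k(1-r_k^*r_k)$.
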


\begin{proof}
	Put $R=T(A)$. For each $i\geq1$, let $e_i\in R$ be the element whose first
	coordinate is zero and whose sequence has value $1$ in position $i$ and $0$
	in every other position. Then $e_i\ne0$, $e_i^2=e_i$, and
	$e_ie_j=0$ when $i\ne j$. Let
	\[
	J=\bigoplus_{i\geq1}\kappa e_i
	\]
	be the ideal of finite-support sequences with first coordinate zero. The
	projection $\pi:R\to A$ onto the first coordinate is a surjective ring
	homomorphism with kernel $J$.
	
	We first identify the maximal ideals needed in the argument. For
	$i\geq1$, set
	\[
	\mathfrak M_i=\ker\bigl(R\longrightarrow\kappa,\ (a,(c_j))\longmapsto c_i\bigr),
	\qquad
	\mathfrak M_\infty=\pi^{-1}(\mathfrak m).
	\]
	Each is maximal, since the corresponding quotient is $\kappa$. Conversely,
	let $\mathfrak M$ be any maximal ideal of $R$. If $J\subseteq\mathfrak M$,
	then $\mathfrak M/J$ is a maximal ideal of
	$R/J\cong A$. As $A$ is local, $\mathfrak M/J=\mathfrak m/J$, and hence
	$\mathfrak M=\mathfrak M_\infty$. If $J\nsubseteq\mathfrak M$, there is an
	$i$ with $e_i\notin\mathfrak M$. Since $e_i(1-e_i)=0$ and $\mathfrak M$ is
	prime, $1-e_i\in\mathfrak M$. The natural map
	$R\to e_iR\cong\kappa$ has kernel $(1-e_i)R=\mathfrak M_i$; maximality
	therefore gives $\mathfrak M=\mathfrak M_i$. Thus
	\[
	\Max(R)=\{\mathfrak M_\infty\}\cup\{\mathfrak M_i:i\geq1\}.
	\]
	
	Now let $I=(r_1,\ldots,r_s)$ be a proper finitely generated ideal of $R$.
	Choose a maximal ideal containing $I$. If $I\subseteq\mathfrak M_i$ for
	some $i$, then each generator has zero $i$th sequence coordinate, so
	$e_ir_\ell=0$ for every $\ell$. It follows that
	$0\ne e_i\in\Ann_R(I)$.
	
	It remains to consider the case $I\subseteq\mathfrak M_\infty$. Write
	$r_\ell=(a_\ell,(c_{\ell i})_{i\geq1})$. Membership in
	$\mathfrak M_\infty$ gives $a_\ell\in\mathfrak m$. By the definition of
	$R$, the sequence $(c_{\ell i})_i$ is eventually equal to
	$\overline{a_\ell}=0$. Thus each such sequence has finite support. Since
	there are only finitely many generators, the union
	\[
	F=\bigcup_{\ell=1}^s\{i:c_{\ell i}\ne0\}
	\]
	is finite. Choose $j\notin F$. Coordinatewise multiplication gives
	$e_jr_\ell=0$ for every $\ell$, and hence $e_jI=0$. Since $e_j\ne0$,
	$\Ann_R(I)\ne0$. This proves the first assertion.
	
	For completeness, call an ideal $L$ dense if $\Ann_R(L)=0$, and
	semi-regular if it contains a finitely generated dense ideal. If $L$ is
	semi-regular, it contains a finitely generated ideal $I$ with
	$\Ann_R(I)=0$. The first assertion forces $I$ not to be proper, so $I=R$;
	therefore $L=R$. Conversely, $R$ is dense because $\Ann_R(R)=0$. Thus $R$
	is the only semi-regular ideal. In particular, every finitely generated
	semi-regular ideal is $R$, which is principal after localization at every
	maximal ideal. By the standard definition, $R$ is a strong Pr\"ufer ring, and thus a  Pr\"ufer ring..
\end{proof}

\subsection{Failure of Gaussianity in dimension at least two}
For a polynomial $f\in B[T]$, let $c_B(f)$ be the ideal generated by its coefficients. A ring is Gaussian if $c_B(fg)=c_B(f)c_B(g)$ for all polynomials $f,g$.

\begin{proposition}\label{prop:nongaussian}
For $n\geq2$, the ring $R_n$ is not Gaussian. It is also not locally Pr\"ufer and not arithmetical.
\end{proposition}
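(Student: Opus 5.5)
The plan is to reduce everything to the localization $(R_n)_{M_\infty}\cong A_n$ and to use that Gaussianity, arithmeticity and local Prüferness are each local and are each inherited by localizations. Concretely, I will use three standard facts: a localization of a Gaussian ring is Gaussian; a Gaussian local domain is a valuation domain; and arithmetical (distributive ideal lattice) is equivalent to every maximal localization being a chain ring. By Theorem~\ref{thm:spec}, $(R_n)_{M_\infty}\cong A_n=k[x_1,\ldots,x_n]_{(x_1,\ldots,x_n)}$. For $n\geq2$ this ring is a local domain that is not a valuation domain, since neither of $x_1,x_2$ divides the other. Hence $R_n$ is not arithmetical, because the localization at $M_\infty$ is not a chain ring. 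It is also not locally Prüfer in any of the standard senses: the local ring $A_n$ is a domain, so a locally Prüfer condition at $M_\infty$ would force $A_n$ to be a valuation domain, which it is not.

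Since the introduction promises a direct failure of the content formula, I will also give an explicit witness inside $R_n$. Put $u=(x_1,0)$ and $v=(x_2,0)$, which lie in $R_n$ because $x_i\in\m$. Take $f=u+vT$ and $g=u-vT$ in $R_n[T]$. Then $fg=u^2-v^2T^2$, so
\[
c(f)c(g)=(u,v)^2=(u^2,uv,v^2),\qquad c(fg)=(u^2,v^2).
\]
It remains to show that $uv\notin(u^2,v^2)R_n$. Suppose $uv=\alpha u^2+\beta v^2$ with $\alpha,\beta\in R_n$, and apply $\pi$. This gives $x_1x_2=a x_1^2+b x_2^2$ in $A_n$. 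Clearing denominators yields an identity in $B_n$ whose degree-two homogeneous part reads $x_1x_2=\lambda x_1^2+\mu x_2^2$ with $\lambda,\mu\in k$, which is impossible. Hence $R_n$ is not Gaussian.

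Finally, I will deduce the remaining assertions and check they are genuinely distinct from Theorem~\ref{thm:strong-prufer}. Arithmetical rings are Gaussian, so the failure of arithmeticity also follows from the explicit non-Gaussian witness. For local Prüferness I will state precisely which convention is used: that $R_{\mathfrak M}$ is Prüfer (in the regular-ideal sense) for every maximal ideal $\mathfrak M$. At $M_\infty$ the ideal $(x_1,x_2)A_n$ is finitely generated and regular but not invertible, because it is not principal in the local domain $A_n$. This settles the claim and contrasts with the global strong Prüfer property of Theorem~\ref{thm:strong-prufer}.

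The main obstacle is this convention bookkeeping, together with making the degree argument in $A_n$ rigorous after clearing the unit denominators. For the latter, reducing modulo $\m^3$ and comparing classes in $\m^2/\m^3\cong$ quadratic forms is the cleanest route.
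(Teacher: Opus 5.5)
Your proposal is correct and follows essentially the paper's route. You use the same witness $f=u+vT$, $g=u-vT$ and reduce $uv\in(u^2,v^2)R_n$ to $x_1x_2\in(x_1^2,x_2^2)A_n$; applying $\pi$, as you do, is equivalent to the paper's localization at $M_\infty$. The Pr\"ufer and arithmetical claims are settled, as in the paper, by non-principality of $(x_1,x_2)A_n$ and the fact that $A_n$ is not a valuation domain. The only real variation is the last step: you rule out $x_1x_2\in(x_1^2,x_2^2)A_n$ by comparing degree-two parts after clearing a denominator with nonzero constant term (or working in $\m^2/\m^3$), whereas the paper maps $A_n$ onto $k[x_1,x_2]/(x_1^2,x_2^2)$; both arguments are equally rigorous.
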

\begin{proof}
Set $u=(x_1,0)$ and $v=(x_2,0)$ in $R_n$, and consider
\[
 f=u+vT,\qquad g=u-vT.
\]
Then
\[
 c_{R_n}(f)c_{R_n}(g)=(u^2,uv,v^2),\qquad
 c_{R_n}(fg)=(u^2,v^2).
\]
These identities hold in every characteristic, including characteristic two. If the two ideals were equal, localization at $M_\infty$ would imply
\[
 x_1x_2\in(x_1^2,x_2^2)A_n.
\]
This is false. Indeed, send all other variables to zero and reduce modulo $(x_1^2,x_2^2)$. This defines a map from $A_n$ to the local Artinian ring $k[x_1,x_2]/(x_1^2,x_2^2)$, since every denominator with nonzero constant term maps to a unit. The image of $x_1x_2$ is a nonzero basis monomial, whereas the images of $x_1^2$ and $x_2^2$ vanish.

The localization at $M_\infty$ is the domain $A_n$. Its ideal $(x_1,x_2)$ is not principal: its quotient by $\m_n(x_1,x_2)$ has $k$-dimension two, as can be checked from the distinct linear monomials. A finitely generated invertible ideal over a local ring is principal, so $A_n$ is not Pr\"ufer. Also $(x_1)$ and $(x_2)$ are incomparable, so $A_n$ is not a valuation domain and its ideals are not linearly ordered. Thus $R_n$ is not locally Pr\"ufer or arithmetical.
\end{proof}

\begin{remark}[The Gaussian theorem remains unaffected]
The theorem of Donadze--Thomas \cite{DT} restricts the weak global dimensions of Gaussian rings. Proposition~\ref{prop:nongaussian} shows directly why it does not apply to $R_n$ for $n\geq2$. The present construction concerns the larger class of total rings of quotients and hence of Pr\"ufer rings under the regular-ideal definition.
\end{remark}

\begin{proposition}[How regularity changes under localization]\label{prop:regularity}
For $n\geq1$, let $u=(x_1,0)\in R_n$. Then $u$ is a zero divisor of $R_n$, but its image in $(R_n)_{M_\infty}\cong A_n$ is a regular nonunit. In particular, a localization of a total ring of quotients need not be a total ring of quotients.
\end{proposition}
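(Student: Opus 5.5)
The plan is to verify the three claims in turn, using only the explicit descriptions from Sections 3 and 4.

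First, $u=(x_1,0)$ lies in $R_n$: its field coordinates are all $0$, and $0=\overline{x_1}$ because $x_1\in\m$. It is a zero divisor because $ue_1=0$ while $e_1\ne0$, exactly as in the proof of Proposition~\ref{prop:units}. Equivalently, $u$ has a vanishing field coordinate, so condition (ii) of that proposition fails.

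Second, we identify the image of $u$ in $(R_n)_{M_\infty}$. Theorem~\ref{thm:spec} gives $(R_n)_{M_\infty}\cong A_n$ via $R_{P_\m}\cong(R/J)_{P_\m/J}\cong A_\m=A_n$, and this identification is induced by $\pi$. So the image of $u$ is $\pi(u)=x_1$. Because $A_n$ is a domain and $x_1\ne0$, the element $x_1$ is regular. It is a nonunit since $x_1\in\m_n=(x_1,\ldots,x_n)A_n$. For $n\ge1$ the ring $A_n$ is a local domain that is not a field, so it is not a total ring of quotients.

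Third, the last sentence follows: $R_n$ is a total ring of quotients by Proposition~\ref{prop:units}, while its localization at $M_\infty$ is not.

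The only point needing care is that the abstract isomorphism $(R_n)_{M_\infty}\cong A_n$ really sends $u/1$ to $x_1$. To settle this I would use the description $S^{-1}R\cong A$ from Proposition~\ref{prop:flatquotient}. Since $S\subseteq R\setminus M_\infty$, the localization at $M_\infty$ is a further localization of $S^{-1}R\cong A$ at $\m$, and $A_\m=A$. The composite map is therefore $\pi$ followed by the identity, which makes the identification of the image immediate.
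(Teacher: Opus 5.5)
Your proposal is correct and follows essentially the same route as the paper: $u$ is annihilated by the nonzero idempotents $e_i$, and under the $\pi$-induced identification $(R_n)_{M_\infty}\cong A_n$ its image is $x_1$, a regular nonunit of the domain $A_n$. The paper justifies the identification by noting that each $e_i$ dies in the localization because $1-e_i\notin M_\infty$; you instead verify via $S^{-1}R\cong A$ that the isomorphism is induced by $\pi$, which is a slightly more explicit check of the same point.
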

\begin{proof}
The nonzero idempotents $e_i$ annihilate $u$. They all vanish after localization at $M_\infty$, because $(1-e_i)e_i=0$ and $1-e_i\notin M_\infty$. The image of $u$ is $x_1$, which is a regular nonunit of the domain $A_n$. Thus $A_n\ne Q(A_n)$.
\end{proof}

\begin{remark}
The statement in \cite[Section 6, Open Question 6]{GS} asks whether $R=Q(R)$ forces $\wgd R\in\{0,1,\infty\}$. The ring $R_2$ has $R_2=Q(R_2)$ and $\wgd R_2=2$, so it disproves that statement as formulated. The realization theorem supplies every larger finite value as well. This conclusion uses only the elementary construction and the Tor and localization calculations established above. The historical reference identifies the formulation being addressed; it is not a hypothesis in the proof, and it is not a claim that no subsequent literature has discussed the same construction.
\end{remark}


\begin{thebibliography}{99}
\bibitem{AM} M. F. Atiyah and I. G. Macdonald, \emph{Introduction to Commutative Algebra}, Addison--Wesley, Reading, MA, 1969.
\bibitem{BG} S. Bazzoni and S. Glaz, Gaussian properties of total rings of quotients, \emph{Journal of Algebra} \textbf{310} (2007), no.~1, 180--193.

\bibitem{CE} H. Cartan and S. Eilenberg, \emph{Homological Algebra}, Princeton University Press, Princeton, NJ, 1956.
\bibitem{DT} G. Donadze and V. Z. Thomas, Bazzoni--Glaz conjecture, \emph{Journal of Algebra} \textbf{420} (2014), 141--160.

\bibitem{Glaz} S. Glaz, \emph{Commutative Coherent Rings}, Lecture Notes in Mathematics, vol.~1371, Springer-Verlag, Berlin, 1989.
\bibitem{GS} S. Glaz and R. Schwarz, Pr\"ufer conditions in commutative rings, \emph{Arabian Journal for Science and Engineering} \textbf{36} (2011), 967--983.

\bibitem{Lam} T. Y. Lam, \emph{Lectures on Modules and Rings}, Graduate Texts in Mathematics, vol.~189, Springer-Verlag, New York, 1999.
\bibitem{Matsumura} H. Matsumura, \emph{Commutative Ring Theory}, translated by M. Reid, Cambridge Studies in Advanced Mathematics, vol.~8, Cambridge University Press, Cambridge, 1986.
\bibitem{Stacks} The Stacks Project Authors, \emph{The Stacks Project}, \url{https://stacks.math.columbia.edu}.
\bibitem{Weibel} C. A. Weibel, \emph{An Introduction to Homological Algebra}, Cambridge Studies in Advanced Mathematics, vol.~38, Cambridge University Press, Cambridge, 1994.
\end{thebibliography}
\end{document}